\theoremstyle{plain} \setlength{\textheight}{21.5cm}
\theoremstyle{definition}
\theoremstyle{definition}
\theoremstyle{plain}
\newtheorem{thm}{Theorem}
\theoremstyle{plain}
\theoremstyle{plain}
\newtheorem{prop}{Proposition}
\theoremstyle{plain}
\theoremstyle{plain}
\theoremstyle{definition}
\newtheorem{exmp}{Example}
\theoremstyle{remark}
\newtheorem{rem}{Remark}
\newcommand{\R}{\mathbb{R}}
\newcommand{\C}{\mathbb{C}}
\newcommand{\PDO}{\Psi\textrm{DO}}
\newcommand{\Ord}{\mathcal{O}}
\newcommand{\del}{\partial}
\newcommand{\ii}{i}
\newcommand{\Arg}{\operatorname{Arg}}
\newcommand{\erfc}{\operatorname{erfc}}
\newcommand{\sgn}{\operatorname{sgn}}
\newcommand{\re}{\operatorname{Re}}
\newcommand{\relErr}{\varepsilon_\textit{rel}}
\newcommand{\res}{r}
\newcommand{\ub}{\varrho}
\newcommand{\wt}{\widetilde}
\newcommand{\Tinf}{T_{f}}
\newcommand{\Ntau}{N_{\tau}}
\newcommand{\Nspc}{N_{x}}
\newcommand{\bx}{\boldsymbol x}
\newcommand{\bu}{\boldsymbol u}
\newcommand{\bp}{\boldsymbol p}
\newcommand{\bxi}{\boldsymbol \xi}
\newcommand{\bk}{\boldsymbol \kappa}
\newcommand{\bD}{\boldsymbol D}
\newcommand{\bE}{\boldsymbol E}
\newcommand{\bH}{\boldsymbol H}
\newcommand{\bB}{\boldsymbol B}
\newcommand{\bEh}{\widehat\bE}
\newcommand{\Dom}{D}
\newcommand{\Bdy}{\del\Dom}
\newcommand{\vInc}{v^{i}}
\newcommand{\vSca}{v^{s}}
\newcommand{\vTot}{v}
\newcommand{\refco}{m}
\newcommand{\A}{\mathcal A}
\newcommand{\B}{\mathcal B}
\newcommand{\CC}{\mathcal C}
\newcommand{\I}{\mathcal I}
\newcommand{\Id}{I}
\newcommand{\srHel}{\left[ \Id + \Delta / \kappa^2 \right]^{1/2}}
\newcommand{\isrHel}{\left[ \Id + \Delta / \kappa^2 \right]^{-1/2}}
\newcommand{\iHel}{\left[ \Id + \Delta / \kappa^2 \right]^{-1}}
\newcommand{\Helmu}{\left[ \refco(\bx) + \frac{\Delta}{\kappa^2} \right]}
\newcommand{\isrHelmu}{\left[ \refco(\bx) + \frac{\Delta}{\kappa^2} \right]^{-1/2}}
\newcommand{\unitInt}{[-1,1]}
\newcommand{\intinf}{\int_{-\infty}^{\infty}}
\newcommand{\dblintinf}{\int_{-\infty}^{\infty}\int_{-\infty}^{\infty}}
\newcommand{\intzeroinf}{\int_0^{\infty}}
\newcommand{\veps}{\varepsilon}
\newcommand{\vphi}{\varphi}
\newcommand{\lbar}{\overline \lambda}
\newcommand{\AppRef}{Appendix~\ref}  
\begin{document}

\title{A novel direct Helmholtz solver in inhomogeneous media based on the
       operator Fourier transform functional calculus}

\author{Max Cubillos and Edwin Jimenez}
\date{}
\maketitle \large

\begin{abstract}
This article presents novel numerical algorithms based on pseudodifferential
operators for fast, direct, solution of the Helmholtz equation in one-, two-
and three-dimensional inhomogeneous unbounded media. The proposed approach
relies on an Operator Fourier Transform (OFT) representation of
pseudodifferential operators ($\PDO$) which frame the problem of computing the
inverse Helmholtz operator, with a spatially-dependent wave speed, in terms of
two sequential applications of an inverse square root pseudodifferential
operator.  The OFT representation of the action of the inverse square root
pseudodifferential operator, in turn, can be effected as a superposition of
solutions of a pseudo-temporal initial-boundary-value problem for a paraxial
equation.  The OFT framework offers several advantages over traditional direct
and iterative approaches for the solution of the Helmholtz equation.  The
operator integral transform is amenable to standard quadrature methods and the
required pseudo-temporal paraxial equation solutions can be obtained using any
suitable numerical method. A specialized quadrature is derived to evaluate the
OFT efficiently and an alternating direction implicit method, used in
conjunction with standard finite differences, is used to solve the requisite
component paraxial equation problems. Numerical studies, in one, two, and three
spatial dimensions, are presented to confirm the expected OFT-based Helmholtz
solver convergence rate. In addition, the efficiency and versatility of our
proposed approach is demonstrated by tackling nontrivial wave propagation
problems, including two-dimensional plane wave scattering from a geometrically
complex inhomogeneity, three-dimensional scattering from turbulent channel flow
and plane wave transmission through a spherically-symmetric gradient-index
Luneburg lens. All computations, even three-dimensional problems which involve
solving the Helmholtz equation with more than one billion complex unknowns, are
performed in a single workstation.
\end{abstract}


\section{Introduction}\label{sec:intro}

We present a \emph{scalable direct solver} for the Helmholtz equation in one-,
two-, and three-dimensional heterogeneous unbounded media based on novel
numerical algorithms for the accurate and efficient evaluation of
pseudodifferential operators.  In our proposed approach, the inverse
variable-coefficient Helmholtz operator is formulated in terms of a composition
of inverse square root pseudodifferential operators.  Our numerical algorithms
are based on a functional calculus framework, which we refer to as the Operator
Fourier Transform (OFT), for the representation of pseudodifferential
operators.  In the OFT framework, the problem of inverting the Helmholtz
operator is reduced to two sequential solves of a pseudo-time paraxial (or
Schr\"odinger) equation and the superposition of these solutions in terms of
pseudo-time Fourier integrals. A salient feature of our proposed approach is
that any numerical method for the solution of time-dependent PDEs can be used
to solve the paraxial equations and any quadrature scheme can be used to
approximate the OFT.

The indefinite Helmholtz equation is notoriously challenging to solve
numerically with iterative methods.  Finite difference, finite element, and
spectral element discretizations of the Helmholtz equation lead to indefinite
linear systems for which iterative methods, including classical stationary
methods and preconditioned Krylov subspace methods, either exhibit slow
convergence or are altogether ineffective~\cite{Erlangga2008,ErnstGander2011}.
For high-frequency three-dimensional problems the computational effort can
quickly become prohibitively expensive since the accuracy is proportional to
both the grid size and the
wavenumber~\cite{BaylissEtAl1985,IhlenburgBabuska1995Disp,IhlenburgBabuska1995FEM,Erlangga2008}.
Numerical experiments with optimized Schwarz methods have shown significant
convergence improvement over Schwarz's original domain decomposition methods,
which date back to the 19th century~\cite{Schwarz1870}, and have been
demonstrated in a variety of nontrivial curvilinear domains but a complete
convergence analysis for this approach is available only for the
non-overlapping subdomain case~\cite{GanderZhang2019}.  More recently, the
development of sweeping preconditioners have lead to iterative Helmholtz
solvers that show residual convergence in a relatively small number of
approximately wavenumber-independent
iterations~\cite{EngquistYing2011a,EngquistYing2011b}.  Although these
preconditioners are promising, it is shown in~\cite{EngquistZhao2018} that the
Helmholtz Green's function is not highly separable in the high-frequency
limit---that is, the number of terms needed in a separable approximation grows
superlinearly with increasing wavenumbers. Because the efficiency of sweeping
preconditioners relies on the separability of submatrices in the discrete
system, wave propagation problems in the high-wavenumber regime will continue
to pose a challenge even for these methods.  Other iterative approaches such as
the WaveHoltz Iteration method, which is based on time-domain solutions of the
wave equation, show promising parallel scalability and improved convergence
over direct discretizations of the Helmholtz equation~\cite{AppeloEtAl2020};
however, similar to all iterative approaches, the efficacy of the WaveHoltz
approach depends on designing a suitable preconditioner.

On the other hand, for certain classes of moderately-sized problems, direct
solvers can provide spectrally-accurate solutions even for scattering problems
in variable media~\cite{GillmanEtAl2015}; they also provide a viable
alternative to iterative methods when solutions are sought with multiple
right-hand sides~\cite{WangEtAl2011}.  The principal drawbacks of direct
solvers typically include an expensive factorization or initialization step as
well as demanding memory requirements, particularly in three dimensions where
the associated linear systems may be comprised of a large number of unknowns.
Some progress has been made to ameliorate the cost of direct solvers by
exploiting sparseness and the low-rank structure of off-diagonal blocks, as in
multifrontal solvers~\cite{WangEtAl2011} and hierarchical matrix
techniques~\cite{BanjaiHackbusch2008};  the efficiency of these methods,
unfortunately, is also subject to the same approximate separability limitations
discussed in~\cite{EngquistZhao2018}. Moreover, parallel scalability of direct
methods is often difficult to achieve.

The OFT approach offers several advantages over other existing Helmholtz
solvers.  First, the cost of the proposed Helmholtz solver is essentially the
cost of two paraxial equation solves. Thus, parallel scaling can be achieved
with domain decomposition approaches for time-dependent PDEs even in complex
curvilinear domains using algorithms such as those described
in~\cite{BrunoEtAl2019}. However, the favorable scaling of our method does not
require a high-performance computing platform to solve large problems nor even
a multi-domain paraxial solver.  Indeed, all computations in this paper rely
only on a single-domain paraxial equation solver and are performed on a single
workstation, including 3D problems with over a billion complex unknowns.  In
addition, the OFT approach allows one to tune the cost of the solver to meet
specific accuracy requirements by adjusting the accuracy of the paraxial
solution. The method is fully analyzable, allowing us to derive error bounds
which are confirmed by numerical convergence tests performed with analytical
solutions. For more complex problems where there is no known exact solution, we
use the asymptotic behavior of the paraxial equation solution and the residual
error to determine an appropriate stopping time for the PDE solves.  For
simplicity, our paraxial equation solver uses standard finite differences to
approximate spatial derivatives and an alternating direction implicit method
derived from the backward Euler scheme (BDF1-ADI) for the pseudo-temporal
evolution.  Guided by our asymptotic analysis of the paraxial equation
solution, we show that it is more efficient to take exponentially-larger time
steps than uniform steps as the solution evolves and the BDF1-ADI time marching
method allows for increasing step sizes without concern for numerical
instabilities.  Finally, to our knowledge, this is the only direct Helmholtz
solver with linear memory scaling, since the memory footprint of the paraxial
solver is linear in the number of unknowns.  Furthermore, there is no expensive
factorization or setup step, unlike other direct methods and some
preconditioners for iterative methods, such as sweeping preconditioners.

Although the focus of this contribution is the solution of the Helmholtz
equation in an unbounded inhomogeneous medium, we emphasize that the OFT
framework is general and can be applied to a broad range of pseudodifferential
operator applications.  Indeed, the OFT first arose as a numerical methodology
in the context of large-scale simulations of high-frequency electromagnetic
propagation~\cite{CubillosJimenez2024}. The time-harmonic Maxwell's equations
can be formally factored into one-way wave equations using pseudodifferential
operators.  In previous work, these operators have been approximated using
rational expansions based on Pad\'e approximation~\cite{KeefeEtAl2018} as well
as AAA-Lawson and Cauchy integral formulations~\cite{KeefeEtAl2024}. The
rational expansion, in turn, leads to a set of large linear systems that must
be solved to evaluate the pseudodifferential operators, which are challenging
to parallelize.  On the other hand, since the OFT relies only on the solution
of pseudo-temporal paraxial equation problems and the evaluation of
Fourier-type integrals, both of which can be tackled with well-established
accurate and efficient numerical methods, the OFT approach naturally leads to
scalable algorithms for the evaluation of pseudodifferential operators.

This paper is organized as follows: Section~\ref{sec:Problem} describes the
setting of the Helmholtz equation considered here. After a brief discussion of
pseudodifferential operators, Section~\ref{sec:OFT} describes the OFT
functional calculus. Section~\ref{sec:OFTSqRtHelm} shows how we apply the OFT
to the Helmholtz problem and describes a simple numerical implementation.
Section~\ref{sec:Error} presents an error analysis, including error estimates,
for the numerical OFT approach.  Section~\ref{sec:NumRes} confirms the accuracy
of the method with convergence analyses in one-, two-, and three-dimensions, as
well as demonstrating the power of the OFT methodology for more complex
wave-scattering problems. Finally, Section~\ref{sec:conclusion} offers some
concluding remarks.

\section{Problem description} \label{sec:Problem}

We consider the Helmholtz equation in an unbounded heterogeneous medium,
\begin{equation}\label{eq:Helmholtz}
  \kappa^2 \refco(\bx) v(\bx) + \Delta v(\bx) = 0, \qquad \bx \in \Dom,
\end{equation}
where $\refco(\bx) > 0$ is a spatially-dependent refraction coefficient,
$\kappa$ is the wavenumber, $\bx = (x_1,\dotsc,x_d) \in \R^d$, for $d = 1,2$ or
$3$, and the Laplacian is $\Delta = \del^2_{x_1} + \dotsb + \del^2_{x_d}$,
where $\del^2_{x_j}$ denotes the second-order derivative with respect to $x_j$.
The Helmholtz equation is of fundamental importance in physics, arising in
models of time-harmonic wave propagation in many fields.  We point out two
important ones.

\paragraph{Electromagnetic waves}

Maxwell's equations for the electric field and displacement ($\bE$ and $\bD$,
respectively) and magnetic field and flux density ($\bH$ and $\bB$,
respectively) in an inhomogeneous medium with no net charge or current sources
are
\begin{align*}
 \nabla \cdot \bD = 0, &\qquad \nabla \times \bE = -\frac{\partial \bB}{\partial t}, \\ 
 \nabla \cdot \bB = 0, &\qquad \nabla \times \bH = \frac{\partial \bD}{\partial t}.
\end{align*}
In a linear medium, we have the constitutive relations $\bD = \veps(\bx) \bE$
and $\bB = \mu(\bx) \bH$, where $\veps(\bx)$ and $\mu(\bx)$ are the
spatially-varying permittivity and permeability of the medium. It is often the
case that the permeability is approximately constant ($\mu(\bx) = \mu_0 =$
const.). If the fields are also assumed to be time-harmonic---that is,
$\bE(\bx,t) = \bEh(\bx) e^{-\ii \omega t}$, where $\omega$ is the frequency of
the radiation, and similarly for the other fields---then Maxwell's equations
can be reduced to the vector Helmholtz equation for the electric field,
\begin{equation*}
  \Delta \bEh - \nabla(\nabla \cdot \bEh) + \frac{\omega^2}{c^2(\bx)} \bEh = 0,
\end{equation*}
where $c(\bx) = 1/\sqrt{\veps(\bx)\mu_0}$ is the spatially-varying speed of
light. In many applications, the electric field is taken to be approximately
divergence free ($\nabla \cdot \bEh \approx 0$). Under this assumption, the
equations are uncoupled and each component of the electric field satisfies the
scalar Helmholtz equation~\eqref{eq:Helmholtz} with $\refco(\bx) =
c_0^2/c^2(\bx)$ and $\kappa = \omega/c_0$, where $c_0$ is a constant reference
speed of light.

\paragraph{Acoustic waves}

The equations for conservation of mass and momentum of a compressible inviscid
fluid are
\begin{align*}
  \frac{\partial \rho}{\partial t} + \nabla \cdot (\rho \bu) &= 0, \\
  \rho \frac{\partial \bu}{\partial t} + \rho \bu \cdot \nabla \bu + \nabla p &= 0,
\end{align*}
where $\rho$, $p$, and $\bu$ are the density, pressure, and velocity of the
fluid, respectively. We linearize the density and velocity around a
(spatially-varying) background field: $\rho = \rho_0(\bx) + \wt \rho$, $\bu =
\bu_0(\bx) + \wt \bu$. Assuming an equation of state of the form $p = p(\rho)$,
we can substitute the time derivative of $\rho$ with that of $p$ by $\partial_t
p \approx \frac{dp}{d\rho} \big|_{\rho_0} \partial_t \wt \rho$. Setting the
background velocity to be zero ($\bu_0(\bx) = 0$), multiplying the mass
conservation equation by $\frac{dp}{d\rho} \big|_{\rho_0}$, taking a second
time derivative, and substituting the momentum conservation equation we have
the wave equation
\begin{equation*}
  \frac{\partial^2 p}{\partial t^2} - \frac{dp}{d\rho} \bigg|_{\rho_0} \Delta p = 0.
\end{equation*}
Assuming a time-harmonic solution $p(\bx,t) = \widehat p(\bx) e^{-\ii \omega
t}$, it follows that $\widehat p(\bx)$ satisfies the Helmholtz
equation~\eqref{eq:Helmholtz} with $\kappa = \omega / c_0$ and $\refco(\bx) =
c_0^2 / c^2(\bx)$, where $\sqrt{\frac{dp}{d\rho} \big|_{\rho_0}} = c(\bx)$ is
the acoustic speed of sound and $c_0$ is a reference sound speed.

\medskip

From the Helmholtz equation, we pose a scattering problem writing the total
field as $\vTot(\bx) = \vInc(\bx) + \vSca(\bx)$, where $\vInc(\bx)$ is an
incident field which we assume satisfies the Helmholtz equation in a
homogeneous medium (i.e., with $\refco(\bx) \equiv 1$), and $\vSca(\bx)$ is the
scattered field.  Substituting this form of the total field into the Helmholtz
equation and dividing by $\kappa^2$, we obtain an equation for the scattered
field
\begin{equation} \label{eq:HelmScat}
\begin{cases}
  \displaystyle \Helmu \vSca(\bx) = g(\bx), & \bx \in \Dom, \\[2ex]
  \displaystyle \vSca(\bx) 
  + \frac{\ii}{\kappa}\frac{\del \vSca(\bx)}{\del n} = 0 , & \bx \in \Bdy,
\end{cases}
\end{equation}
where the source term is given by
\begin{equation}\label{eq:HelmScatSrc}
  g(\bx) \coloneqq -(\refco(\bx) - 1) \vInc(\bx).
\end{equation}
Since in this paper we are primarily concerned with scattering in open domains,
we impose (first-order) non-reflecting boundary conditions at all points of the
domain boundary $\Bdy$. We note that other forms of transparent boundary
conditions, such as perfectly matched layers, could also be imposed with the
method presented in this paper, but, for simplicity, we do not consider them
here.

\section{Pseudodifferential operators and the Operator Fourier Transform} \label{sec:OFT}

Pseudodifferential operators ($\PDO$) are most commonly defined over symbol
classes, which are vector spaces of smooth functions of two vector arguments
and whose mixed derivatives satisfy a boundedness requirement in terms of one
of the vector arguments~\cite{Abels2011,Wong2014}.  In this approach, if
$\sigma(\bx,\bxi)$ is a symbol, then its associated pseudodifferential operator
$\Psi_{\sigma}$ is defined by
\begin{equation}\label{eq:SymbolPDO}
  (\Psi_{\sigma} u)(\bx) \coloneqq \frac{1}{(2\pi)^d}\int_{\R^d} 
    e^{\ii \bx \cdot \bxi} \sigma(\bx,\bxi) \widehat{u}(\bxi) d\bxi,
\end{equation}
where $u:\R^{d}\to \C$ is a smooth function and $\widehat{u}$ denotes its
Fourier transform.  Other pseudodifferential representations also exist, such
as those based on Cauchy's integral theorem and the spectral theorem for normal
operators~\cite{NazaikinskiiEtAl2011}. 

In this work we consider a functional calculus framework for
pseudodifferential operators where a $\PDO$ is expressed in terms of a function
$f$ of an operator argument $A$ so that its application to a function $g$ has
the representation
\begin{equation} \label{eq:OFT}
  [f(A)g](\bx) = \frac 1{\sqrt{2\pi}} \intinf \widehat f(\tau) 
                 e^{\ii \tau A}g(\bx) \,d\tau.
\end{equation}
Following standard convention, $f$ denotes both a function of an operator
argument and a scalar argument and $\widehat f$ is the Fourier transform of $f$
(regarded here as a function of a scalar, $f(x)$).  The $\PDO$ acts on a smooth
function $g$ of a spatial vector variable and the argument $A$ can belong to a
very general operator algebra but for our purposes we take it to be a spatial
derivative operator.  The definition~\eqref{eq:OFT} appears in,
e.g.,~\cite{Taylor1981,Taylor1991,NazaikinskiiEtAl2011}, with various
conditions of validity on the classes of functions $f$ and operators $A$ for
which~\eqref{eq:OFT} is well-defined. There does not seem to be a standard name
associated with this definition, and so henceforth we will refer to it as the
Operator Fourier Transform, or OFT for short (although technically it uses the
inverse Fourier transform as its basis).

The term $u(\bx,\tau) \coloneqq e^{\ii\tau A}g(\bx)$ in~\eqref{eq:OFT}, the
action of the operator $e^{\ii \tau A}$ on $g$, is interpreted as the solution
at a pseudo-time $t=\tau$ of the initial-value problem (IVP)
\begin{equation} \label{eq:IVP}
\begin{cases}
  u_t(\bx,t) = \ii Au(\bx,t),  & (\bx,t) \in \R^d \times (-\infty,\infty), \\
  u(\bx,0) = g(\bx), & \bx \in \R^d.
\end{cases}
\end{equation}
Since our ultimate goal is to solve the time-independent
equation~\eqref{eq:HelmScat} using an inverse (pseudodifferential) Helmholtz
operator, we regard the solution to~\eqref{eq:IVP} as a pseudo-temporal
initial-value problem which serves only as an auxiliary step towards the
evaluation of~\eqref{eq:OFT}.

The OFT overcomes several numerical difficulties associated with the evaluation
of Fourier transform-based pseudodifferential operators of the form defined in
equation~\eqref{eq:SymbolPDO}.  First, in the case of the OFT, singularities in
the function $f$ (which manifest themselves in the Fourier transform $\widehat
f(\tau)$) can be handled using standard integration techniques, such as a
change of variables, specialized quadrature rules, asymptotics, etc. Second,
computing a pseudodifferential operator defined via the OFT is reduced to
solving~\eqref{eq:IVP} and integrating the $\widehat f(\tau)$-weighted solution
in pseudo-time. The IVP~\eqref{eq:IVP} can be solved using any numerical
method, including finite differences, finite element methods, spectral methods,
etc.; the pseudo-time evolution can be effected using explicit time-marching
methods or fast implicit schemes such as alternating direction implicit
methods, or even hybrid implicit-explicit algorithms~\cite{BrunoEtAl2019}.

\subsection{Using the OFT to solve differential equations} \label{sec:OFTExamples}
Before we proceed to the development of our OFT-based inverse Helmholtz
operator, we demonstrate how the OFT framework can be used to solve two simple
ODEs. For the reader's convenience, two Matlab programs \texttt{oft\_exa1.m} and
\texttt{oft\_exa2.m} that implement both examples are included as supplementary
materials.
\begin{exmp}
Consider the following boundary-value problem (BVP) over $[0,1]$,
\begin{align}\label{eq:ODE1}
\begin{cases}
  v(x) - \ii v''(x) = g(x) \coloneqq (1 + \ii \pi^2) \sin(\pi x), & x \in (0,1), \\
  v(x) = 0,   & x = 0,1,
\end{cases}
\end{align}
whose solution is $v(x) = \sin(\pi x)$. Letting  $f(y) = (1-\ii y)^{-1}$, we can
write the solution formally as
\begin{equation}\label{eq:fODE1}
  v(x) = [f\big( \partial_x^2 \big) g](x).
\end{equation}
The Fourier transform of $f$ is given by
\begin{equation}
  \widehat f(\tau) = \sqrt{2\pi} e^{-\tau} H(\tau),
\end{equation}
where $H(\tau)$ is the Heaviside function. Using the OFT definition in
equation~\eqref{eq:OFT} with $A = \partial_x^2$,~\eqref{eq:fODE1} becomes
\begin{equation} \label{eq:ExmpParInt}
  v(x) = \intzeroinf e^{-\tau} e^{\ii \tau \partial_x^2} g(x) \,d\tau. 
\end{equation}
Let $u(x,\tau) = e^{\ii \tau\partial_x^2} g(x)$. Then $u$ is the solution at
time $t = \tau$ of the initial-boundary-value problem (IBVP)
\begin{equation} \label{eq:ExmpParEq}
  \begin{cases}
    \partial_t u(x,t) = \ii \partial_x^2 u(x,t), & (x,t) \in (0,1) \times (0,\infty), \\
    u(0,t) = u(1,t) = 0, & t \in (0,\infty), \\
    u(x,0) = g(x), & x \in [0,1].
  \end{cases}
\end{equation}
The solution of the IBVP~\eqref{eq:ExmpParEq} is simply
\begin{equation}
  u(x,t) = (1+\ii \pi^2) e^{-\ii \pi^2 t} \sin(\pi x).
\end{equation}
The OFT leads to the analytical solution $v(x)$ of the ODE~\eqref{eq:ODE1} in
the form
\begin{align}\label{eq:OFTODE1}
  v(x) &= (1+\ii \pi^2) \intzeroinf e^{-\tau} e^{-\ii \pi^2 \tau} \sin(\pi x) \,d\tau.
\end{align}

\medskip

In the general case, neither the IBVP~\eqref{eq:ExmpParEq} nor the
integral~\eqref{eq:OFTODE1} can be evaluated in closed form.  However, both can
be approximated to any degree of accuracy using appropriate numerical methods.
To demonstrate, we solve~\eqref{eq:ExmpParEq} numerically using a centered
finite difference scheme in space and time and we evaluate~\eqref{eq:OFTODE1}
using the composite trapezoidal rule. Given a positive integer $J$, define the
spatial and temporal discretization points, respectively, as $x_j = j\Delta x$
and $t_n = n\Delta t$ for integers $n\geq 0$ and $0 \leq j \leq J$ and a fixed
spatial grid spacing $\Delta x$ such that $J \Delta x = 1$ and a temporal step
size $\Delta t$ small enough to satisfy the stability condition. Let $v_j
\approx v(x_j)$, $u^n_j \approx u(x_j,t_n)$ be approximations of the unknowns
at the spatio-temporal grid points and let $N$ be a sufficiently large positive
integer such that $e^{-t_N} \approx 0$, where $t_N = N\Delta t$.  A numerical
integration scheme for the OFT integral~\eqref{eq:OFTODE1} is then
\begin{align}
  &v_j = \frac{\Delta t}2 u^0_j + \Delta t \sum_{n=1}^{N-1} e^{-n\Delta t} u^n_j, 
  \qquad 0 \leq j \leq J, \\
\intertext{and the approximate solution to the IBVP~\eqref{eq:ExmpParEq} is obtained from}
  &\begin{cases}
    \dfrac{u^{1}_j - u^{0}_j}{\Delta t} 
    = \ii \dfrac{u^0_{j+1} - 2 u^0_j + u^0_{j-1}}{\Delta x^2}, & 1 \leq j \leq J-1, \\[1ex]
    \dfrac{u^{n+1}_j - u^{n-1}_j}{2\Delta t} 
     = \ii \dfrac{u^n_{j+1} - 2 u^n_j + u^n_{j-1}}{\Delta x^2}, & n \geq 1, 
       \, 1 \leq j \leq J-1, \\[1ex]
    u^n_j = 0, & n \geq 1, \, j = 0,J \\
    u^0_j = g(x_j), & 0 \leq j \leq J.
  \end{cases}
\end{align}
Note that we use a forward-time centered-space scheme for the first time step.
Thus, to evaluate the OFT approximation $v_j$, a cumulative sum is computed as
the numerical solution $u^n_j$ of the IBVP~\eqref{eq:ExmpParEq} progresses up
to a final pseudo-time $t_N = N \Delta t$.  
\end{exmp}
\begin{exmp}
The OFT approach can also be applied to problems over unbounded domains.
Consider the following BVP over $\R$:
\begin{align}\label{eq:ODE2}
\begin{cases}
  v(x) - v''(x) = g(x) \coloneqq (3-4x^2)e^{-x^2}, & x \in \R, \\
  v(x) \rightarrow 0,   & x \rightarrow \pm \infty,
\end{cases}
\end{align}
whose solution is $v(x) = e^{-x^2}$. Letting $f(y) = (1+y^2)^{-1}$ we can write
the solution formally as
\begin{equation}
  v(x) = [f\big( \ii \partial_x \big) g](x).
\end{equation}
In this case the Fourier transform of $f$ is
\begin{equation}
  \widehat f(\tau) = \sqrt{\frac{\pi}{2}} e^{-|\tau|}.
\end{equation}
Using the OFT representation~\eqref{eq:OFT} with $A = \ii \partial_x$, we find
that
\begin{align*}
  v(x) = \frac 12 \intinf e^{-|\tau|} e^{-\tau\partial_x} g(x) \,d\tau 
       = \frac 12 \intzeroinf e^{-\tau} \Big( e^{\tau\partial_x} g(x) 
                          + e^{-\tau\partial_x} g(x) \Big) \,d\tau, 
\end{align*}
where we used the transformation $\tau \rightarrow -\tau$ on the integral from
$-\infty$ to zero. Let $u(x,\tau) = e^{\tau\partial_x} g(x)$ and $w(x,\tau) =
e^{-\tau\partial_x} g(x)$. Then these functions are the solutions at time $t =
\tau$ of the initial-value problems
\begin{subequations} \label{eq:AdvEqs}
\begin{align}
  &\begin{cases}
    \partial_t u(x,t) - \partial_x u(x,t) = 0, & (x,t) \in \R \times (0,\infty), \\
    u(x,0) = g(x), & x \in \R,
  \end{cases} \\
  &\begin{cases}
    \partial_t w(x,t) + \partial_x w(x,t) = 0, & (x,t) \in \R \times (0,\infty), \\
    w(x,0) = g(x), & x \in \R,
  \end{cases}
\end{align}
\end{subequations}
whose solutions are
\begin{equation}
  u(x,t) = g(x+t), \quad w(x,t) = g(x-t).
\end{equation}
It follows that the solution $v(x)$ of the original ODE can be written as
\begin{align}\label{eq:OFTODE2}
  v(x) &= \frac 12 \intzeroinf e^{-\tau} \Big( g(x+\tau) + g(x-\tau) \Big) \,d\tau.
\end{align}

\medskip

We now solve this problem numerically using upwind finite-difference schemes
for the PDEs~\eqref{eq:AdvEqs} and the composite trapezoidal rule for the
integral in equation~\eqref{eq:OFTODE2}. Define the spatial and temporal grid
as in the previous example, with the temporal step size $\Delta t$ small enough
to satisfy the stability condition for this scheme. Let $v_j \approx v(x_j)$,
$u^n_j \approx u(x_j,t_n)$, $w^n_j \approx w(x_j,t_n)$ be approximations of the
unknowns at the spatio-temporal grid points and let $N$ and $J$ be sufficiently
large positive integers such that $e^{-t_N} \approx 0$ and $g(x_{\pm J})
\approx 0$.  The numerical integration scheme for the OFT is then
\begin{align}
  &v_j = \frac{\Delta t}4 \big( u^0_j + w^0_j \big) 
       + \frac{\Delta t}2 \sum_{n=1}^{N-1} e^{-n\Delta t} \big( u^n_j + w^n_j \big), \\
\intertext{and the initial-value problems~\eqref{eq:AdvEqs} are solved using}
  &\begin{cases}
    \dfrac{u^{n+1}_j - u^n_j}{\Delta t} - \dfrac{u^{n}_{j+1} - u^n_j}{\Delta x} = 0, & n \geq 1, \, j \leq J-1, \\[1ex]
    u^n_J = 0, & n \geq 1, \\
    u^0_j = g(x_j), & j \leq J,
  \end{cases} \\
  &\begin{cases}
    \dfrac{w^{n+1}_j - w^n_j}{\Delta t} + \dfrac{w^{n}_{j} - w^n_{j-1}}{\Delta x} = 0, & n \geq 1, \, j \geq -J+1, \\[1ex]
    w^n_{-J} = 0, & n \geq 1, \\
    w^0_j = g(x_j), & j \geq -J.
  \end{cases}
\end{align}
\end{exmp}
%

\section{The OFT applied to the inverse square-root Helmholtz operator} \label{sec:OFTSqRtHelm}

We demonstrate the OFT approach for evaluating pseudodifferential operators by
considering $f(A) = 1/\sqrt{A}$ where the operator $A = \refco(\bx) +
\Delta/\kappa^2$ is the form of the Helmholtz operator used
in~\eqref{eq:HelmScat}. Formally, two sequential applications of the $\PDO$
$f(A)$ yield the inverse Helmholtz operator for~\eqref{eq:HelmScat}.  The
Fourier transform of $f(x) = 1/\sqrt{x}$ is 
\begin{equation}\label{eq:fHat}
  \widehat f(\tau) = \sqrt{\frac{-\ii}{\pi}} H(\tau) \frac 1{\sqrt{\tau}}.
\end{equation}
Throughout this article, the square root of a complex number is taken to be the
principal square root.  Substituting~\eqref{eq:fHat} into
equation~\eqref{eq:OFT} we obtain the following form of the inverse square-root
Helmholtz operator
\begin{align}\label{eq:SqRtHelmOFT}
  f(A) = \sqrt{\frac{-\ii}{\pi}} \intzeroinf \frac 1{\sqrt{\tau}} e^{\ii\tau A} \,d\tau 
       = \sqrt{\frac{-\ii}{\pi}} \intzeroinf 
         \frac{e^{\ii\tau}}{\sqrt{\tau}} e^{\ii\tau (A-I)} \,d\tau. 
\end{align}
To evaluate the operator~\eqref{eq:SqRtHelmOFT} applied to a function $g(\bx)$
using the OFT approach, we define $u(\bx,\tau) = e^{\ii\tau (A-I)} g(\bx)$.
Then, to effect the action of the $\PDO$~\eqref{eq:SqRtHelmOFT} on $g(\bx)$
over a domain $\Dom \subset \R^d$ means that we must solve the
initial-boundary-value problem
\begin{equation} \label{eq:ParaxialEqn}
\begin{cases}
  u_t(\bx,t) = \ii \big(\refco(\bx) - 1\big) u(\bx,t) + \dfrac{\ii}{\kappa^2} \Delta u(\bx,t), & (\bx,t) \in \Dom \times (0,\infty), \\[1ex]
  u(\bx,t) + \dfrac{\ii}{\kappa} \dfrac{\partial u(\bx,t)}{\partial n} = 0, & (\bx,t) \in \Bdy \times (0,\infty), \\[1ex]
  u(\bx,0) = g(\bx), & \bx \in \Dom.
\end{cases}
\end{equation}

We can easily verify that $f^2(A)g = (f \circ f)(A)g$ does indeed solve the
Helmholtz equation~\eqref{eq:HelmScat}.  Since the paraxial equation is solved
imposing the same homogeneous boundary conditions as the Helmholtz
problem~\eqref{eq:HelmScat} for all time, any superposition will also satisfy
those boundary conditions. We need only check that $f^2(A)$ is a multiplicative
(composition) inverse of $A$:
\begin{align}
  f^2(A) A &= \frac{1}{2\pi} \dblintinf \widehat f(s) \widehat f(\tau) e^{\ii (s + \tau) A} A\,ds\,d\tau \nonumber \\
    &= \frac{-\ii}{\sqrt{2\pi}} \intinf  \ii A e^{\ii r A} \bigg( \frac 1{\sqrt{2\pi}} \intinf \widehat f(r-s) \widehat f(s)\,ds \bigg) \,dr. \nonumber
\end{align}
By the convolution theorem, the term in parentheses is $\mathcal
F\big[x^{-1}\big](r) = -\ii \sqrt{\pi/2} \sgn(r)$. Therefore, to integrate by
parts we only assume that the norm of the paraxial solution satisfies $\|
e^{\ii t A} \| = o( 1 )$ as $t \rightarrow \infty$, which we will see is indeed
the case for the boundary conditions imposed. In this case we have
\begin{align}
  f^2(A) A &= \frac{-\ii}{\sqrt{2\pi}} \intinf e^{\ii r A} \bigg( \frac 1{\sqrt{2\pi}} \intinf \widehat f'(r-s) \widehat f(s)\,ds \bigg) \,dr. 
\end{align}
The term in parentheses is the Fourier transform of $-\ii x f^2(x) = -\ii$, so
we have
\begin{align}
  f^2(A) A &= \frac{\ii}{\sqrt{2\pi}} \intinf e^{\ii r A} \Big( -\ii \sqrt{2\pi}\,\delta(r) \Big) \,dr = I. \nonumber
\end{align}

A numerical method for evaluating $f(A)$ using the OFT now hinges on the
numerical approximations of the integral in equation~\eqref{eq:SqRtHelmOFT} and
the solution of the PDE~\eqref{eq:ParaxialEqn}. Algorithms to tackle both of
these problems are developed in the following subsections.

\subsection{Numerical computation of the OFT}\label{sec:NumericalOFT}

In this section, we develop a quadrature for the integral
in~\eqref{eq:SqRtHelmOFT} based on the behavior of the low, medium, and high
frequency content of the solution to the paraxial
equation~\eqref{eq:ParaxialEqn} in a slowly-varying medium.

For simplicity, we consider scattering of an incident plane wave $\vInc =
e^{\ii \kappa \bp \cdot \bx}$, where $\bp$ is a unit vector indicating the
propagation direction. We write the wavespeed inhomogeneity as $\refco(\bx) = 1 +
\veps \rho(\bx)$ and consider the asymptotic regime $\veps \ll 1$. The initial
condition of the paraxial equation will be assumed to be of form $g(\bx) =
e^{\ii \kappa \bp \cdot \bx} h(\bx)$. If we write the solution as $u(\bx,t) =
e^{\ii (\kappa \bp \cdot \bx - t)} w(\bx,t)$ and substitute this form into
equation~\eqref{eq:ParaxialEqn}, then $w$ satisfies the equation
\begin{equation} \label{eq:ParaxialW}
\begin{cases}
  w_t + \dfrac 2{\kappa} \bp \cdot \nabla w 
  = \ii\veps\rho\, w + \dfrac{\ii}{\kappa^2} \Delta w, 
  & (\bx,t) \in \R^d \times (0,\infty), \\[1ex]
  w(\bx,0) = h(\bx), & \bx \in \R^d.
\end{cases}
\end{equation}
The left-hand side of the PDE~\eqref{eq:ParaxialW} has an advective term with
velocity $2/\kappa$, and for large $\kappa$ and small $\veps$ this is the
dominant behavior. The solution for just the advective part is $w(\bx,t) \sim
h(\bx - (2/\kappa)t \bp)$, and if the computational domain is of width $L$, this
advective part of the solution will leave the domain at time $t \sim \kappa
L/2$. We will call the $t \lesssim \kappa L/2$ part of the solution evolution
the ``advection regime.'' This can also be considered the ``high-frequency
regime,'' since most of the high-frequency content of the solution is leaving
the domain here.

On the other hand, the asymptotic behavior of the solution is described for $t
\gtrsim (\kappa L)^2$, $\kappa L \gg 1$ in~\ref{sec:ExactSoln}; this is the
``asymptotic regime.'' It can also be described as the ``low-frequency
regime,'' since the spatial component of the asymptotic solution is approaching
the lowest eigenmode.

In between the advection and asymptotic regimes---that is, for $\kappa L
\lesssim t \lesssim (\kappa L)^2$---we have the ``transition regime,'' or
``mid-frequency regime.'' Although we do not have an exact description of the
behavior of the solution here, we expect that higher frequencies will continue
to leave the domain until all that remains are the lowest order modes governed
by the asymptotic regime.

\medskip

Based on this heuristic analysis of the paraxial solution, we expect that an
optimal strategy for numerically computing the OFT must be adapted to each
regime. The advection regime will require the highest resolution in both space
and time as the main features of the initial condition leave the domain.
Subsequently, spatial and temporal discretization requirements may gradually be
relaxed until the asymptotic regime dominates, where the lowest resolution is
required.  Hence, for maximum efficiency, both the spatial and temporal grid
resolutions should be coarsened, and thus require less computational work,
after the solution to~\eqref{eq:ParaxialEqn} progresses beyond the advection
regime.

However, to simplify the presentation of the method as well as the
corresponding error analysis in Section~\ref{sec:Error}, we will describe a
more straightforward approach. For the spatial discretization we use
second-order finite differences over a fixed, uniformly spaced grid. The
pseudo-temporal evolution of the paraxial equation is effected with a
first-order alternating direction implicit method based on backward Euler
(BDF1-ADI) together with an exponentially increasing time-step size.  Finally,
the OFT quadrature is based on linear interpolation between time step
intervals. Complete numerical algorithm details are provided in the
following subsections.

\subsection{Temporal and spatial discretization of the paraxial equation}\label{sec:Discretization}

We discretize equation~\eqref{eq:ParaxialEqn} in time using the first-order
BDF1-ADI method~\cite{BrunoEtAl2019}. Let the time steps be given by $t_{n+1} =
t_n + \Delta t_n$. In 3D, the split equation for obtaining $u^{n+1}$ from $u^n$
is given by
\begin{subequations} \label{eq:BDF1Split3D}
\begin{align}
  \big( I + \Delta t_n \A \big) u^{(1)} &= u^n, \\
  \big( I + \Delta t_n \B \big) u^{(2)} &= u^{(1)}, \\
  \big( I + \Delta t_n \CC \big) u^{n+1} &= u^{(2)},
\end{align}
\end{subequations}
where $u^{(1)}$ and $u^{(2)}$ are intermediate unknowns, and the spatial
operators are defined as 
\begin{equation}
  \A = -\ii (\refco -1) I - \ii \kappa^{-2} \partial_x^2, \quad \B = -\ii \kappa^{-2} \partial_y^2, \quad \CC = -\ii \kappa^{-2} \partial_z^2.
\end{equation}
These operators are then approximated by a centered finite difference scheme---e.g.,
\begin{equation}
  \partial_x^2 u^n_{ijk} \approx \delta_x^2 u^n_{ijk} = \frac{u^n_{i+1,jk} - 2 u^n_{ijk} + u^n_{i-1,jk}}{\Delta x^2},
\end{equation}
where $i,j,k$ are the grid indices and $\Delta x$ is the grid spacing in the
$x$ direction. The boundary conditions in~\eqref{eq:HelmScat} are approximated
by three-point second-order one-sided differences, which preserve the spatial
order. The ODEs in equation~\eqref{eq:BDF1Split3D} are then solved line-by-line
in each dimension with a tridiagonal linear system solver.

\subsection{Exponential time-stepping}\label{sec:TimeStepping}

The exponential time-stepping scheme we use relies on temporal nodes of the
form
\begin{equation} \label{eq:ExpTimeSteps}
  t_n = a (b^n - 1), \qquad n \geq 0.
\end{equation}
It is useful to choose the constants $a$ and $b$ such that the initial
time-step size is $\Delta t_0$ and the time-step size at some later time $T$ is
$\Delta t_T$. In other words, given prescribed step sizes $\Delta t_0$ and
$\Delta t_T$, we satisfy the equations
\begin{equation}
  \begin{cases}
    t_1 = a (b - 1) = \Delta t_0, \\
    t_N = a (b^N - 1) = T, \\
    \Delta t_N = t_{N+1} - t_N = \Delta t_T,
  \end{cases}
\end{equation}
by setting 
\begin{equation} \label{eq:ExpTimeStepsPrms}
  a = \frac{T}{R}, \quad b = 1 + R \frac{\Delta t_0}{T}, \quad R = \frac{\Delta t_T}{\Delta t_0} - 1.
\end{equation}
%

\subsection{OFT quadrature}\label{sec:OFTQuad}

We first define the linear interpolant between two time steps of a sequence
$\{u^m\}$ as
\begin{equation}
 \I_n u^m (t) = \frac{t_{n+1}-t}{\Delta t_n} u^n + \frac{t-t_n}{\Delta t_n} u^{n+1}.
\end{equation}
The OFT integral is then approximated by
\begin{align}
\sqrt{\frac{-\ii}{\pi}} \intzeroinf \frac{e^{\ii\tau}}{\sqrt{\tau}}\, u(\tau) \,d\tau &= \sum_{n=0}^{\infty} \sqrt{\frac{-\ii}{\pi}} \int_{t_n}^{t_{n+1}} \frac{e^{\ii\tau}}{\sqrt{\tau}}\, u(\tau) \,d\tau, \nonumber \\
  &\approx \sum_{n=0}^N \sqrt{\frac{-\ii}{\pi}} \int_{t_n}^{t_{n+1}} 
             \frac{e^{\ii\tau}}{\sqrt{\tau}}\, \I_nu^m (\tau) \,d\tau
\end{align}
The integrals of each component of the interpolant can be written in closed
form:
\begin{align}
  w_1(a,b) &= \sqrt{\frac{-\ii}{\pi}} \int_a^b 
              \frac{e^{\ii\tau}}{\sqrt{\tau}} \frac{b-\tau}{b-a} \nonumber \\
           &= \frac{1+\ii}{\sqrt{2\pi}(b-a)} 
              \Big[ \sqrt{b} e^{\ii b} - \sqrt{a} e^{\ii a} \nonumber \\
           &\qquad + (1+2\ii b) \big( C(\sqrt{a}) - C(\sqrt{b}) 
                                    + \ii S(\sqrt{a}) - \ii S(\sqrt{b}) \big) \Big], \\
  w_2(a,b) &= \sqrt{\frac{-\ii}{\pi}} \int_a^b \frac{e^{\ii\tau}}{\sqrt{\tau}} 
              \frac{\tau-a}{b-a} \nonumber \\
           &= \frac{1+\ii}{\sqrt{2\pi}(b-a)} 
              \Big[ \sqrt{a} e^{\ii a} - \sqrt{b} e^{\ii b} \nonumber \\
           &\qquad + (1+2\ii a) \big( C(\sqrt{b}) - C(\sqrt{a}) 
                                    + \ii S(\sqrt{b}) - \ii S(\sqrt{a}) \big) \Big],
\end{align}
where $C(x)$ and $S(x)$ are the Fresnel cosine and sine integral special functions,
\begin{equation}
  C(x) = \int_0^x \cos(t^2)\,dt, \quad S(x) = \int_0^x \sin(t^2)\,dt.
\end{equation}
Using these formulas, we can write a composite quadrature rule,
\begin{align}
\sqrt{\frac{-\ii}{\pi}} \intzeroinf \frac{e^{\ii\tau}}{\sqrt{\tau}}\, u(\tau) \,d\tau &\approx \sum_{n=0}^{N} \omega_n u^n
\end{align}
where the weights are given by
\begin{equation} \label{eq:OFTWeights}
  \omega_n =
  \begin{cases}
    w_1(0,t_1), & n = 0, \\
    w_2(t_{n-1},t_n) + w_1(t_n,t_{n+1}), & 1 \leq n \leq N-1, \\
    w_2(t_{N-1},t_N), & n = N
  \end{cases}
\end{equation}
%

\section{Error analysis}\label{sec:Error}

In this section we derive an error estimate for the numerical OFT presented in
the previous section in the constant coefficient case in 1D. To simplify the
analysis, we do not include the error of the finite-difference spatial
discretization since this is not intrinsic to the OFT framework. Instead, we
assume a domain $\Dom$ and a linear space of functions $V(\Dom)$ with norm
$\|\cdot\|$ such that the operator $A$ is bounded, there is $\sigma > 0$ such
that the solution operator $S(t) = e^{\ii t (A-I)}$ satisfies
\begin{equation} \label{eq:SolnIneq}
  \| S(t) \| \leq e^{-\sigma t}, \quad t > 0,
\end{equation}
and the backward Euler operator $B(\Delta t) = \big( I - \ii \Delta t (A-I)
\big)^{-1}$ is both consistent and strictly unconditionally stable (and
therefore convergent), satisfying the inequality
\begin{equation} \label{eq:BackEulerIneq}
  \| B(\Delta t) \| \leq \frac 1{1 + \rho(C) \Delta t}, \quad 0 < \Delta t < C,
\end{equation}
for any $C > 0$, where $\rho(C) > 0$ is a constant that depends on $C$.
Consistency implies $\rho \rightarrow \sigma$ as $C \rightarrow 0$. The
approximate solution operator $\wt S_n$ is then defined by the
recurrence,
\begin{equation}
  \wt S_{n+1} = B_n \wt S_n, \quad \wt S_0 = I, \quad B_n = B(\Delta t_n).
\end{equation}
The above assumptions are satisfied, for example, by taking $\Dom$ to be an
interval of length $L$, $K$ a positive integer, $V_K(\Dom)$ the space of all
finite series of the form
\begin{equation}
  v(x) = \sum_{k=1}^K c_k \vphi_k(x),
\end{equation}
where $c_k \in \C$ and $\vphi_k(x)$ are the eigenfunctions given in the
appendix, and using the norm
\begin{equation}
  \|v(x)\| = \bigg( \sum_{k=1}^K |c_k|^2 \bigg)^{1/2}.
\end{equation}
(This is not the same as the norm induced by the $L^2$ inner product on
$\Dom$.) In the appendix it is shown that all eigenvalues have negative
imaginary part so that the inequalities~\eqref{eq:SolnIneq}
and~\eqref{eq:BackEulerIneq} hold.

With these definitions, we have the following estimate.

\begin{thm} \label{thm:Error}
Let $f(A)$ be given by equation~\eqref{eq:SqRtHelmOFT} and let
\begin{equation}
f^N(A) = \sum_{n=0}^N \omega_n \wt S_n,
\end{equation}
where the weights are given by equation~\eqref{eq:OFTWeights} and the sequence
of time steps is given by equations~\eqref{eq:ExpTimeSteps}
and~\eqref{eq:ExpTimeStepsPrms}. Then the error is bounded by
\begin{equation}
  \| f(A) - f^N(A) \| \leq E_1 + E_2 + E_3,
\end{equation}
where, fixing $\sigma$ and $R$ and letting $T \rightarrow \infty$ and $\Delta
t_0 \rightarrow 0$, the error terms are given asymptotically by
\begin{align}
  E_1 \lesssim \frac{\|A-I\|^2}{12\sqrt{\sigma}} \Delta t_0^2, \quad E_2 \lesssim \frac{\|A-I\|^2}{8\sigma^{3/2}} \Delta t_0, \quad E_3 \lesssim \frac{e^{-\sigma T}}{\sigma \sqrt{\pi T}}.
\end{align}
$E_1$ is the error incurred by the piecewise linear quadrature, $E_2$ is
the error due to backward Euler, and $E_3$ is the error of truncating the OFT
integral at $t = t_N = T$.
\end{thm}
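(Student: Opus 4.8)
The plan is to decompose the total error into three pieces, each matching one of the sources listed in the theorem, and to bound each separately using the contraction estimates \eqref{eq:SolnIneq} and \eqref{eq:BackEulerIneq} together with the closed-form OFT weights \eqref{eq:OFTWeights}. First I would write
\[
  f(A) - f^N(A) = \underbrace{\Bigl(f(A) - \int_0^{T} \tfrac{-\ii}{\sqrt\pi}\tfrac{e^{\ii\tau}}{\sqrt\tau}S(\tau)\,d\tau\Bigr)}_{\text{truncation, }E_3}
    + \underbrace{\sum_{n=0}^{N-1}\int_{t_n}^{t_{n+1}}\!\! k(\tau)\bigl(S(\tau) - \I_n S(t^m)(\tau)\bigr)d\tau}_{\text{quadrature, }E_1}
    + \underbrace{\sum_{n=0}^{N}\omega_n\bigl(S(t_n) - \wt S_n\bigr)}_{\text{backward Euler, }E_2},
\]
where $k(\tau) = \sqrt{-\ii/\pi}\,e^{\ii\tau}/\sqrt\tau$; here the middle term uses $\int_{t_n}^{t_{n+1}}k(\tau)\I_nS(t^m)(\tau)\,d\tau = w_1(t_n,t_{n+1})S(t_n) + w_2(t_n,t_{n+1})S(t_{n+1})$ by definition of $w_1,w_2$, and the telescoping of these into the $\omega_n$ then isolates $E_2$.

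For $E_3$, I would use $\|S(\tau)\|\le e^{-\sigma\tau}$ and $|k(\tau)| = \tau^{-1/2}/\sqrt\pi$ to get $E_3 \le \pi^{-1/2}\int_T^\infty \tau^{-1/2}e^{-\sigma\tau}\,d\tau$, and then estimate the incomplete Gamma–type integral asymptotically as $T\to\infty$ by $\int_T^\infty \tau^{-1/2}e^{-\sigma\tau}d\tau \sim T^{-1/2}e^{-\sigma T}/\sigma$, yielding the stated $E_3 \lesssim e^{-\sigma T}/(\sigma\sqrt{\pi T})$. For $E_1$, on each subinterval the linear interpolation error for the \emph{operator-valued} function $\tau\mapsto S(\tau)$ is controlled by its second derivative $S''(\tau) = (\ii(A-I))^2 S(\tau)$, so $\|S(\tau) - \I_nS(t^m)(\tau)\| \le \tfrac18\Delta t_n^2\|A-I\|^2 \sup_{[t_n,t_{n+1}]}\|S\| \le \tfrac18\Delta t_n^2\|A-I\|^2 e^{-\sigma t_n}$; multiplying by $\int_{t_n}^{t_{n+1}}|k| \le \pi^{-1/2}\Delta t_n\, t_n^{-1/2}$ (handling $n=0$ where $t_0=0$ separately, since there $\int_0^{t_1}\tau^{-1/2}d\tau = 2\sqrt{t_1}$ is still finite), and summing, I would convert $\sum_n \Delta t_n^3 t_n^{-1/2}e^{-\sigma t_n}$ into a Riemann-sum estimate of $\Delta t_0^2\int_0^\infty t^{-1/2}e^{-\sigma t}\,dt = \Delta t_0^2\sqrt{\pi/\sigma}$ (using that $\Delta t_n \approx \Delta t_0\, b^n$ and, with $R$ fixed and $\Delta t_0\to0$, $b\to1$ so $\Delta t_n$ varies slowly), which combined with the $1/8$ and an extra factor absorbed into the constant gives $E_1\lesssim \|A-I\|^2\Delta t_0^2/(12\sqrt\sigma)$. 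For $E_2$, the standard backward-Euler one-step error is $\|S(\Delta t) - B(\Delta t)\| \le c\,\Delta t^2\|A-I\|^2$ locally, and accumulating it along the variable grid via the telescoping identity $S(t_n) - \wt S_n = \sum_{j<n}\bigl(\prod_{i>j}B_i\bigr)\bigl(B_j S(t_j) - S(t_{j+1})\bigr)$ together with $\|B_i\|\le(1+\rho\Delta t_i)^{-1}$ gives a bound $\|S(t_n)-\wt S_n\| \lesssim \|A-I\|^2 e^{-\sigma t_n}\sum_{j<n}\Delta t_j^2 / (\text{something})$; combining with $|\omega_n|\lesssim \pi^{-1/2}\Delta t_n t_n^{-1/2}$ and again passing to $\Delta t_0\int_0^\infty t^{-1/2}e^{-\sigma t}dt$ produces the $O(\Delta t_0)$ term $E_2\lesssim \|A-I\|^2\Delta t_0/(8\sigma^{3/2})$.

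The main obstacle is the accumulation of the backward-Euler error on the \emph{nonuniform, exponentially growing} mesh: one must show that the local truncation errors, which grow like $\Delta t_n^2$ as the steps enlarge, are nonetheless damped enough by the product of the strictly contractive operators $B_i$ (each contributing a factor $(1+\rho\Delta t_i)^{-1}$) that the global error stays $O(\Delta t_0)$ with the precise constant $1/(8\sigma^{3/2})$; this requires carefully using the consistency hypothesis $\rho\to\sigma$ as the step sizes shrink and bounding $\prod_{i>j}(1+\rho\Delta t_i)^{-1}$ by $e^{-\sigma(t_n - t_{j+1})}$ up to lower-order corrections. A secondary technical point is justifying the passage from the discrete sums $\sum_n \Delta t_n^{1+\alpha}t_n^{-1/2}e^{-\sigma t_n}$ to the integral $\int_0^\infty t^{-1/2}e^{-\sigma t}dt$ uniformly as $\Delta t_0\to0$ with $R$ fixed, including the integrable singularity at $t=0$ where the weights involve $\sqrt{t_1}$ rather than $t_1^{-1/2}$; this is where the $w_1,w_2$ closed forms must be expanded for small arguments (using $C(x),S(x)\sim x$ as $x\to0$) to confirm the $n=0$ contribution is genuinely lower order and does not spoil the stated asymptotics.
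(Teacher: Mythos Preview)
Your proposal is correct and follows essentially the same decomposition and estimates as the paper's proof: the paper also splits the error into the tail integral $E_3$, the linear-interpolation error of the exact semigroup $E_1$, and the accumulated backward-Euler error $E_2$, and bounds each using the contractivity hypotheses \eqref{eq:SolnIneq}--\eqref{eq:BackEulerIneq}. Two minor differences are worth noting: (i) for $E_1$ the paper keeps the pointwise interpolation remainder $\tfrac12(\tau-t_n)(t_{n+1}-\tau)\|A-I\|^2 e^{-\sigma t_n}$ and integrates it against $\tau^{-1/2}$, which is how the constant $1/12$ (rather than your $1/8$) arises---your max-norm bound $\tfrac18\Delta t_n^2$ loses exactly this factor; (ii) for both $E_1$ and $E_2$ the paper converts the sums over $n$ to integrals not by a Riemann-sum heuristic in $t$ but by substituting the explicit exponential-node formula $t_n=a(b^n-1)$ and integrating in the index variable $n$, which cleanly handles the nonuniform step sizes and produces closed-form expressions whose asymptotics as $\Delta t_0\to0$, $T\to\infty$ with $R$ fixed are then read off.
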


The proof is given in \AppRef{sec:ErrorProof}.

\section{Numerical results}\label{sec:NumRes}

In this section we present convergence studies in one, two and three spatial
dimensions that verify the stability, accuracy, and efficiency of our OFT-based
Helmholtz equation solvers. We also demonstrate our numerical algorithms with
plane wave scattering and transmission problems through complex two- and
three-dimensional inhomogeneous media.  We emphasize that all numerical results
presented in this section, even three-dimensional examples that require
solutions with more than one billion unknowns, were performed in a single AMD
EPYC 7543P $32$-core workstation. (Images were generated using the
visualization software VisIt~\cite{VisIt2012}.)

\subsection{Convergence studies for the $\isrHel$ and $\iHel$ operators}\label{sec:ConvRes}

To verify the accuracy of our proposed algorithms for the evaluation of both
the $\isrHel$ and $\iHel$ pseudodifferential operators, we solve the following
boundary-value problems (BVP)
\begin{subequations} \label{eq:TwoPassHel}
\begin{align} 
&\begin{cases}
  \srHel v_1(\bx) = g(\bx), & \bx \in \Dom, \\
  \displaystyle v_1(\bx) 
  + \frac{i}{\kappa}\frac{\del v_1(\bx)}{\del n} = 0 , & \bx \in \Bdy,
\end{cases}\label{eq:TwoPassHel1} \\
&\begin{cases}
  \srHel v_2(\bx) = v_1(\bx), & \bx \in \Dom, \\
  \displaystyle v_2(\bx) 
  + \frac{i}{\kappa}\frac{\del v_2(\bx)}{\del n} = 0 , & \bx \in \Bdy,
\end{cases}\label{eq:TwoPassHel2}
\end{align}
\end{subequations}
where $\bx = (x_1,\dotsc,x_d)$ and $\Dom = \unitInt^d$, for $d = 1,2,3$. In
this case we set the refraction coefficient $\refco(\bx) \equiv 1$ and the
wavenumber $\kappa = 10$.  In all cases, the source term $g(\bx) =
e^{-a_0|\bx|^2 + \ii\kappa x_1}$, with parameter $a_0 = 10$. 

Each paraxial equation initial-boundary-value problem (IBVP) associated
with~\eqref{eq:TwoPassHel} can be written succinctly as
\begin{equation}\label{eq:ParForTwo}
\begin{cases}
  \displaystyle u_t(\bx,t) 
  = \frac{i}{\kappa^2}\Delta u(\bx,t), & (\bx,t) \in \Dom \times (0,\Tinf], \\[1ex]
  \displaystyle u(\bx,t) 
  + \frac{i}{\kappa}\frac{\del u(\bx,t)}{\del n} = 0 , & (\bx,t) \in \Bdy \times (0,\Tinf], \\[1ex]
  u(\bx,0) = h( \bx ), & \bx \in \Dom,
\end{cases}
\end{equation}
where only the initial condition $h(\bx)$ differs in each case.
For~\eqref{eq:TwoPassHel1} $h(\bx) = g(\bx)$, while
problem~\eqref{eq:TwoPassHel2} requires $h(\bx) = v_1(\bx)$. Note that both
problems are solved up to the same final pseudo-time $t=\Tinf$.

Using the non-reflecting boundary conditions eigenfunction basis detailed
in~\ref{sec:ExactSoln}, we can write the exact solutions
to~\eqref{eq:TwoPassHel} for the given $g(\bx)$. In three spatial dimensions,
for example, the solutions to~\eqref{eq:TwoPassHel} are
\begin{subequations}\label{eq:ExaSolNRBC}
\begin{align}
  v_1( x_1, x_2, x_3 ) &= \sum_{\ell=1}^{\infty}\sum_{m=1}^{\infty}\sum_{n=1}^{\infty}
                \frac{ c_{\ell m n} }
                { \sqrt{1 - (\lambda_{1,\ell}^2 + \lambda_{2,m}^2 + \lambda_{3,n}^2)/\kappa^2} }
                \varphi_{\ell}( x_1 ) \varphi_{m}( x_2 ) \varphi_{n}( x_3 ) \label{eq:ExaSolNRBC1}\\
  v_2( x_1, x_2, x_3 ) &= \sum_{\ell=1}^{\infty}\sum_{m=1}^{\infty}\sum_{n=1}^{\infty}
                \frac{ c_{\ell m n} }
                { 1 - (\lambda_{1,\ell}^2 + \lambda_{2,m}^2 + \lambda_{3,n}^2)/\kappa^2 }
                \varphi_{\ell}( x_1 ) \varphi_{m}( x_2 ) \varphi_{n}( x_3 ) \label{eq:ExaSolNRBC2}
\end{align}
\end{subequations}
The eigenfunctions and eigenvalues along $x_1$ are denoted by
$\varphi_{\ell}(x_1)$ and $\lambda_{1,\ell}$, respectively, and similarly for
the other dimensions. The coefficients $c_{\ell m n}$ are derived using the
source function $g(\bx)$ (see \AppRef{sec:ExactSoln} for additional
details). The exact solutions $v_1$ and $v_2$ in one and two dimensions are
obtained analogously.

Tables~\ref{tab:ConvStudyPass1} and~\ref{tab:ConvStudyPass2} present relative
errors for numerical solutions to~\eqref{eq:TwoPassHel}, for $\kappa=10$,
obtained with various spatial and temporal resolutions.  Across each row in the
table, we use the same exponential pseudo-temporal step sequence obtained with
$\Delta t_0$ and $\Delta t_T = 10\Delta t_0$, $T = \kappa L$ where $L = 2$ is
the length of the domain, and a uniform spatial size $\Delta x \,\, (=\Delta
x_1=\Delta x_2=\Delta x_3)$ for each one-, two- and three-dimensional BVP. The
number of pseudo-time steps is denoted by $\Ntau$ and the number of points
along each spatial dimension by $N_{x}$. The 2D and 3D problems use the same
spatial discretization ($\Delta x$) of $[-1,1]^d$ along all dimensions. A
single spatial grid point is written as $\bx_j$, where $j=(j_1,\dotsc,j_d)$ and
the value of a function at a grid point is denoted by $v_j$.  After setting
$\Delta t_0 \in \{10^{-1},10^{-2},\dotsc,10^{-5}\}$, we form the OFT quadrature
nodes from equation~\eqref{eq:ExpTimeSteps} and the corresponding OFT
quadrature weights from~\eqref{eq:OFTWeights}.  (Note that $T_f =
t_{N_{\tau}}.$) Denoting an exact solution by $v^{exa}$ and an approximate
solution by $v^{app}$, where $v \in \{v_1,v_2\}$, we compute the relative error
over all spatial grid points $j$ as
\begin{equation}
  \relErr = \frac{\max_{j}{|v_j^{exa} - v_j^{app}|}}{\max_{j}{|v_j^{exa}|}}
\end{equation}
We write $\relErr^{1D}, \relErr^{2D}$, and $\relErr^{3D}$ to distinguish errors
obtained in 1D, 2D, and 3D. Table~\ref{tab:ConvStudyPass2} also includes the
relative residual error $\res$, defined as
\begin{equation}
  \res = \frac{\max_{j}{\Big| \big( (I + \Delta/\kappa^2)v^{app} - g \big)_j \Big|}}{\max_{j}{|g_j|}},
\end{equation}
and Table~\ref{tab:ConvStudyPass1} also includes values for $\ub$, which are
defined as
\begin{equation}
  \ub = \frac{\max_j |v_j^{app}|}{\sigma \sqrt{T_f}},
\end{equation}
with $\sigma = (\kappa L)^{-1}$, which is an estimate for the OFT truncation
error $E_3$ given in Theorem~\ref{thm:Error} and can be used heuristically as a
stopping criteria for first application of the inverse square-root Helmholtz
operator. The table suggests that for this problem, in order to achieve an
error tolerance of $\veps_{tol}$, the paraxial solve should run until time $t$
such that $\max_j |v^{app}_j| \lesssim 10 \veps_{tol} \sigma \sqrt{t}$.
\begin{table}[h!] 
\centering
\caption{Relative errors in the evaluation of the operator $\isrHel$ 
         applied to the function $g(\bx) = e^{-a_0|\bx|^2 + \ii\kappa x_1}$, 
         where $\kappa = 10$ and $a_0 = 10$. In all cases, the exponential 
         time-stepping parameter $\Delta t_T = 10 \Delta t_0$.}
\begin{tabular}{lcccccccc}\toprule
  &&& \multicolumn{6}{c}{$v_1=\isrHel g$}
  \\ \cmidrule(lr){4-9}
  $\Delta t_0$        &$\Ntau$    &$\Nspc$  &$\relErr^{1D}$      &$\ub^{1D}$          &$\relErr^{2D}$      &$\ub^{2D}$          &$\relErr^{3D}$      &$\ub^{3D}$ \\
  \midrule
  $5.0\cdot 10^{-2}$  &$102$      &$70$     &$1.2\cdot 10^{-1}$  &$4.9\cdot 10^{-1}$  &$7.4\cdot 10^{-2}$  &$2.0\cdot 10^{-1}$  &$4.8\cdot 10^{-2}$  &$8.2\cdot 10^{-2}$ \\
  $5.0\cdot 10^{-3}$  &$1308$     &$200$    &$1.3\cdot 10^{-2}$  &$2.0\cdot 10^{-1}$  &$8.2\cdot 10^{-3}$  &$9.1\cdot 10^{-2}$  &$5.3\cdot 10^{-3}$  &$4.1\cdot 10^{-2}$ \\
  $5.0\cdot 10^{-4}$  &$17810$    &$600$    &$1.8\cdot 10^{-3}$  &$6.0\cdot 10^{-2}$  &$8.8\cdot 10^{-4}$  &$1.7\cdot 10^{-2}$  &$5.4\cdot 10^{-4}$  &$5.1\cdot 10^{-3}$ \\
  $5.0\cdot 10^{-5}$  &$233199$   &$1800$   &$1.8\cdot 10^{-4}$  &$7.1\cdot 10^{-3}$  &$8.7\cdot 10^{-5}$  &$9.7\cdot 10^{-4}$  &---                 &--- \\
  $5.0\cdot 10^{-6}$  &$2617277$  &$5400$   &$1.9\cdot 10^{-5}$  &$8.1\cdot 10^{-4}$  &---                 &---                 &---                 &--- \\
 \bottomrule
\end{tabular}
\label{tab:ConvStudyPass1}
\end{table}
\begin{table}[h!] 
\centering
\caption{Relative errors in the evaluation of the operator $\iHel$ 
         applied to the function $g(\bx) = e^{-a_0|\bx|^2 + \ii\kappa x_1}$, 
         where $\kappa = 10$ and $a_0 = 10$. In all cases, the exponential 
         time-stepping parameter $\Delta t_T = 10 \Delta t_0$.}
\begin{tabular}{lcccccccc}\toprule
  &&& \multicolumn{6}{c}{$v_2=\iHel g$}
  \\ \cmidrule(lr){4-9}
  $\Delta t_0$        &$\Ntau$    &$\Nspc$  &$\relErr^{1D}$      &$\res^{1D}$         &$\relErr^{2D}$      &$\res^{2D}$         &$\relErr^{3D}$      &$\res^{3D}$ \\
  \midrule
  $5.0\cdot 10^{-2}$  &$102$      &$70$     &$2.3\cdot 10^{-1}$  &$1.7\cdot 10^{-1}$  &$1.6\cdot 10^{-1}$  &$1.0\cdot 10^{-1}$  &$1.1\cdot 10^{-1}$  &$8.6\cdot 10^{-2}$ \\
  $5.0\cdot 10^{-3}$  &$1308$     &$200$    &$2.5\cdot 10^{-2}$  &$2.3\cdot 10^{-2}$  &$1.8\cdot 10^{-2}$  &$1.4\cdot 10^{-2}$  &$1.2\cdot 10^{-2}$  &$9.9\cdot 10^{-3}$ \\
  $5.0\cdot 10^{-4}$  &$17810$    &$600$    &$2.5\cdot 10^{-3}$  &$4.3\cdot 10^{-3}$  &$1.8\cdot 10^{-3}$  &$2.0\cdot 10^{-3}$  &$1.3\cdot 10^{-3}$  &$1.0\cdot 10^{-3}$ \\
  $5.0\cdot 10^{-5}$  &$233199$   &$1800$   &$2.5\cdot 10^{-4}$  &$5.0\cdot 10^{-4}$  &$1.9\cdot 10^{-4}$  &$1.2\cdot 10^{-4}$  &---                 &--- \\
  $5.0\cdot 10^{-6}$  &$2617277$  &$5400$   &$2.4\cdot 10^{-5}$  &$5.5\cdot 10^{-5}$  &---                 &---                 &---                 &--- \\
 \bottomrule
\end{tabular}
\label{tab:ConvStudyPass2}
\end{table}

The results reported in Table~\ref{tab:ConvStudyPass1} clearly indicate that each
refinement of $\Delta t$ by a factor of $10$ leads to an order of magnitude
decrease in the relative error in the approximation of the $\PDO$s $\isrHel$
and $\iHel$, which confirm the expected first-order convergence predicted in
Theorem~\ref{thm:Error}. We note that the error decrease is consistent in all
three dimensions and is in fact slightly lower in higher dimensions.  This is
consistent with the physics of wave scattering and the asymptotic analysis
leading to equation~\eqref{eq:UAsym}, which show that waves decay faster in
higher dimensions. Since the numerical approximation to $v_1$ is used as an
initial condition for the second BVP~\eqref{eq:TwoPassHel2}, the evaluation of
$\iHel g$ is subject to additional error. The data shows that, for all spatial
dimensions and all discretization parameters considered, the error associated
with the inverse Helmholtz pseudodifferential operator is in fact nearly twice
that of the $\isrHel g$ computations.

\subsection{Two dimensional plane wave scattering from an inhomogeneous obstacle}\label{sec:AFLogo}
%
\begin{figure}[h!]
  \centering
  \includegraphics[width=\textwidth]{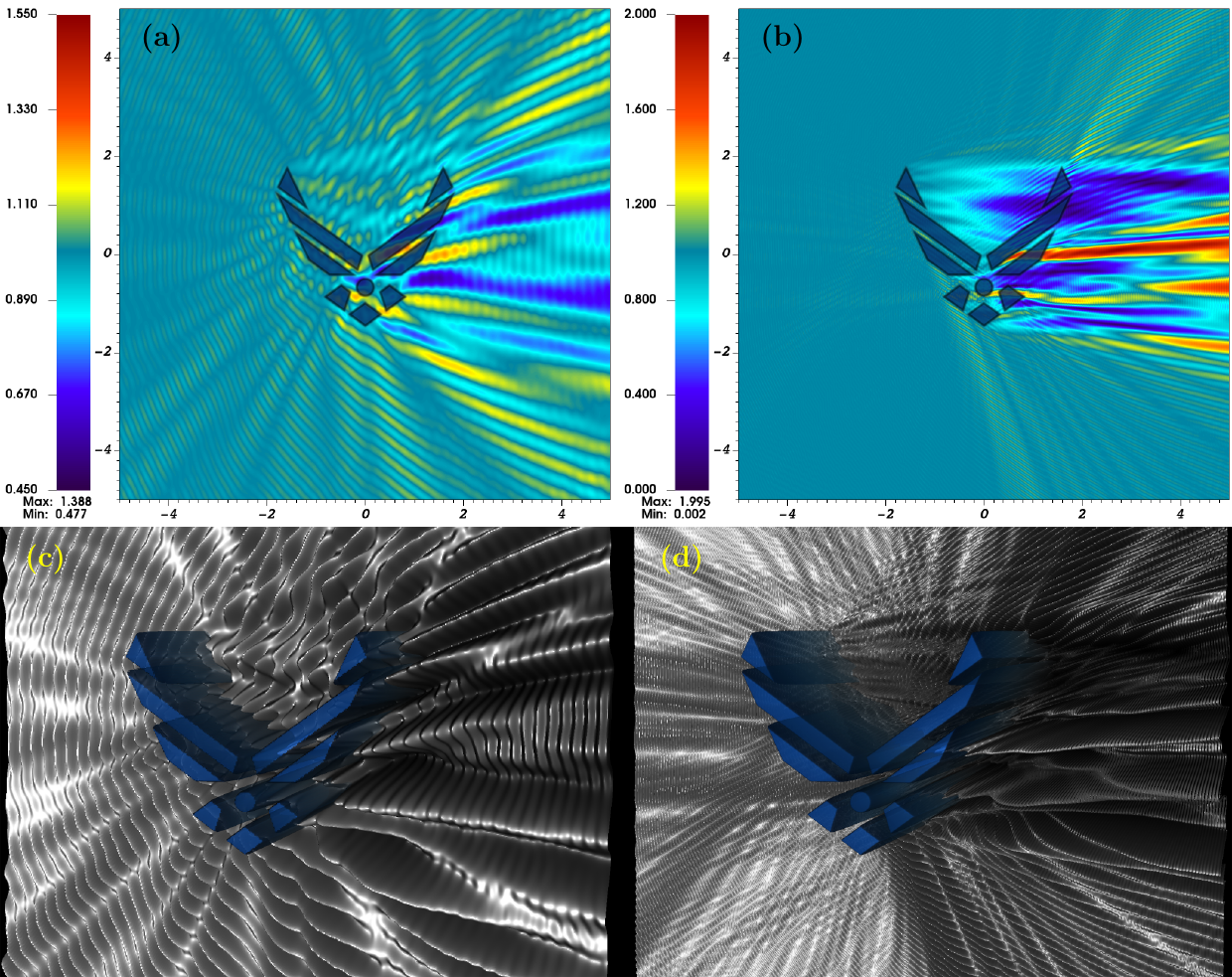}
  \caption{Total field magnitude $|\vTot|=|\vInc+\vSca|$ for plane wave scattering 
           through a variable 
           wave speed inhomogeneity in the shape of the U.S. Air Force logo. 
           Total field magnitude with highlighted logo and surrounding partially 
           reflected/transmitted
           waves for \emph{(a)} 25 wavelengths and \emph{(b)} 100 wavelengths, respectively,
           along each dimension.  Elevated plots of the total field magnitude for \emph{(c)} 25 
           wavelengths and \emph{(d)} 100 wavelengths, which highlight the interaction 
           of the incoming wave with the inhomogeneity.} 
  \label{fig:AFLogo}
\end{figure}
In this section we present computational results for plane wave scattering from
a geometrically complex inhomogeneity contained in a two-dimensional domain
$\Dom = [-5,5]^2$.  We solve the Helmholtz equation~\eqref{eq:HelmScat} for the
scattered field $\vSca(\bx)$ where, in this case, $\bx = (x_1,x_2)$, and the
incident field is $\vInc(\bx) = e^{i\kappa x_1}$. The spatially-dependent
refractive coefficient $\refco(\bx) = 1 + \delta \refco(\bx)$ is in the shape of the
U.S.~Air Force logo (see Figure~\ref{fig:AFLogo}), where $0 \leq \delta
\refco(\bx) \leq 10^{-1}$. The source term $g(\bx) = -(\refco(\bx) - 1) \vInc(\bx).$

Conceptually, the solution to~\eqref{eq:HelmScat} is obtained simply as $\vSca
= \Psi^2 g = (\Psi \circ \Psi) g$, i.e, via two consecutive applications
(compositions) of the pseudodifferential operator
\begin{equation}\label{eq:HelScaPsi}
  \Psi \coloneqq \isrHelmu
\end{equation}
applied to the source function $g$. In practice, we must solve two IBVP of the
form~\eqref{eq:ParaxialEqn} sequentially. For the first IBVP, the source term
$g$ is used as the initial condition for the variable-coefficient paraxial
equation~\eqref{eq:ParaxialEqn}. The OFT integrates this solution in
pseudo-time to compute an approximation to $v_1 =  \Psi g$ at all points of the
domain $\Dom$. The second IBVP that must be solve is identical
to~\eqref{eq:ParaxialEqn} except the initial condition is set to $v_1$ in that
case. The solution to the second IBVP is then used in the cumulative evaluation
of the OFT to produce the final approximation to $\vSca = \Psi v_1 = \Psi^2 g$.

For this example, we consider two wavelengths. In the first case, $\lambda =
0.4$ so that $\kappa = 2\pi / 0.4 \approx 15.7$ and the computational domain
consists of $500 \times 500 = 250,000$ uniformly-spaced points. The spatial
step size along each dimension is $\Delta x = \Delta x_1 = \Delta x_2 = 2\cdot
10^{-2}$. The exponential time-stepping parameters were set to $\Delta t_0 =
10^{-4}$, $\Delta t_T = 5\cdot 10^{-3}$, and $T = \kappa L$ where $L = 10$ is
the length of the domain.  In the second case, $\lambda = 0.1$ and $\kappa =
2\pi / 0.1 \approx 62.8$, the grid is comprised of $2000 \times 2000 =
4,000,000$ uniformly-spaced points, and the spatial step size is $\Delta x =
\Delta x_1 = \Delta x_2 = 5\cdot 10^{-3}$. For both resolutions, the OFT was
integrated in pseudo-time to a variable-coefficient Helmholtz equation residual
tolerance of $10^{-2}$.

Figure~\ref{fig:AFLogo}(a) and Figure~\ref{fig:AFLogo}(b) show the total field
magnitude $|\vTot| = |\vInc + \vSca|$ that result from the $25$-wavelength and
$100$-wavelength incident plane waves traveling from left to right. The waves
are partially reflected and partially transmitted through the inclusion.
Scattering of the smaller wavelength wave through the obstacle results in a
minimum and maximum field magnitude of $0.477$ and $1.388$, respectively. The
$100$-wavelength incident wave scattering leads to both lower and higher field
magnitudes of $0.002$ and $1.995$.  Elevated pseudocolor plots of the total
field magnitude for the $25\lambda$ and $100\lambda$ plane wave scattering
cases are shown in Figure~\ref{fig:AFLogo}(c) and Figure~\ref{fig:AFLogo}(d),
respectively.  These views highlight complex multiple wave scattering through
and around the refractive index perturbation.

\subsection{Three-dimensional plane wave scattering from turbulent 
channel flow}\label{sec:Turbulence}
%
\begin{figure}[h!]
  \centering
  \includegraphics[width=\textwidth]{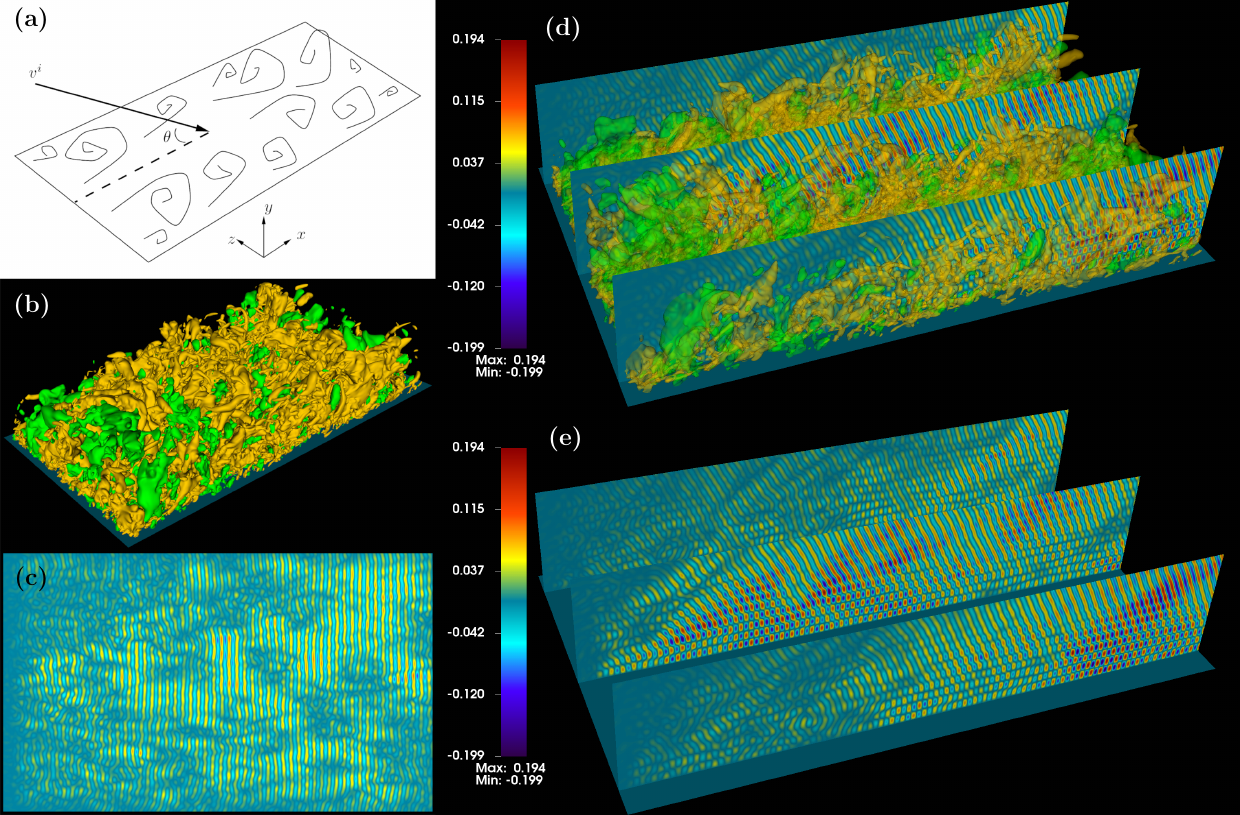}
  \caption{Plane wave scattering from turbulent channel flow. \emph{(a)} 
           Problem schematic of a plane wave with wavenumber $\kappa = 78.5$ 
           traveling in the direction $(1, -0.5, 0) / \sqrt{1.25}$ impinging 
           on the channel wall located at $x_2 = 0$. \emph{(b)} Variable 
           refraction coefficient $\refco(\bx)$ derived from turbulent flow data.
           \emph{(c)} Horizontal cross-section pseudocolor plot at $x_2=0.25$ of
           the real part of the scattered field $\vSca$. \emph{(d)-(e)}
           Vertical pseudocolor planes with and without the turbulent refraction
           inhomogeneity for the real part
           of the scattered field $\vSca$ which show the multiple wave
           scattering that results from interaction with the channel wall and the
           complex surrounding inhomogeneous medium.} 
  \label{fig:ChannelScat}
\end{figure}
Next, we demonstrate our numerical algorithms with a simulation of
three-dimensional plane wave propagation through turbulent flow over a channel
wall. The turbulent flow data was obtained from the Johns Hopkins Turbulence
Databases~\cite{LiEtAl2008,PerlmanEtAl2007,GrahamEtAl2016} and consists of a
direct numerical simulation (DNS) of channel flow in a domain $[0,8\pi) \times
[-1,1] \times [0,3\pi)$, obtained using $2048 \times 512 \times 1536$ nodes.
The friction velocity $u_{\tau} = 0.0499$, and friction velocity Reynolds
number $\textrm{Re}_{\tau} \approx 1000$. Additional turbulence flow parameter
details can be found in the reference~\cite{GrahamEtAl2016}. Although the
turbulence simulation is incompressible, we use the thermodynamic relations
\begin{equation}
  p = p_0 + \wt p = \rho^\gamma, \quad \frac{dp}{d\rho} = c^2, 
\end{equation}
to obtain a spatially-dependent refraction coefficient $\refco(\bx) = \big( c_0 /
c(\bx) \big)^2$ from the turbulent pressure data $\wt p(\bx)$, taking
$p_0 = 1$, $\gamma = 1.4$, and $c_0$ to be the average of $c(\bx)$ over the
$x_2=0$ plane of the channel.

We use only a subset of the turbulence data, which we interpolate and shift to
our domain of interest $\tilde{\Dom} = [-2.5,2.5] \times [0,1] \times
[-1.5,1.5]$.  The channel wall is located at $x_2 = 0$ and the incident plane
wave that impinges on the wall has the form $\tilde{v}^{i}(\bx) = e^{i \bk
\cdot \bx }$, where $\bk = (\kappa_1,\kappa_2,\kappa_3) = |\bk| (1, -0.5, 0) /
\sqrt{1.25}$, $|\bk| = \kappa = 78.5$.  To pose the wall scattering problem in
the same form as~\eqref{eq:HelmScat}, which assumes non-reflecting boundary
conditions on all sides of the computational domain, we use the method of
images so that the incident field used in computations is 
\begin{equation}\label{eq:ChaFlwIncFld}
  \vInc(x_1,x_2,x_3) = e^{i (\kappa_1 x_1 + \kappa_2 x_2 + \kappa_2 x_3)}
                     - e^{i (\kappa_1 x_1 - \kappa_2 x_2 + \kappa_2 x_3)}
\end{equation}
and the domain $\tilde{\Dom}$ is reflected about $x_2 = 0$ so that $\Dom =
[-2.5,2.5] \times [-1,1] \times [-1.5,1.5]$. Note that
solving~\eqref{eq:HelmScat} over $\Dom$ and using $\vInc$ as defined
in~\eqref{eq:ChaFlwIncFld} has the effect of implicitly imposing zero Dirichlet
conditions at $x_2=0$. The number of discretization points used for $\Dom$ is
$1442 \times 722 \times 963$, so that we must solve for $1,002,602,412$ complex
unknowns; the exponential time-stepping parameters were set to $T = \kappa L,\,
\Delta t_0 = 2\cdot 10^{-4}$ and $\Delta t_T = 4 \cdot 10^{-2}$, where the
longest length of the domain $L=5$. The OFT was integrated in pseudo-time for
$131,600$ steps to satisfy a variable-coefficient Helmholtz equation residual
tolerance of $10^{-2}$.

Figure~\ref{fig:ChannelScat}(a) shows a schematic of the scattering problem
setup and Figure~\ref{fig:ChannelScat}(b) displays the turbulent flow variable
refraction coefficient over the channel wall.  The real part of the scattered
field $\vSca$ over a horizontal cross-section pseudocolor plot at $x_2=0.25$
is shown in Figure~\ref{fig:ChannelScat}(c).  The images in
Figures~\ref{fig:ChannelScat}(d) and~\ref{fig:ChannelScat}(e) present vertical
pseudocolor planes with and without the turbulent refraction inhomogeneity for
the real part of the scattered field $\vSca$; the intricate multiple wave
scattering that results from the impinging plane wave with the channel wall and
the complex surrounding inhomogeneous medium is visible throughout the
computational domain.

\subsection{Plane wave transmission through a Luneburg lens}\label{sec:Luneburg}
%
\begin{figure}[h!]
  \centering
  \includegraphics[width=\textwidth]{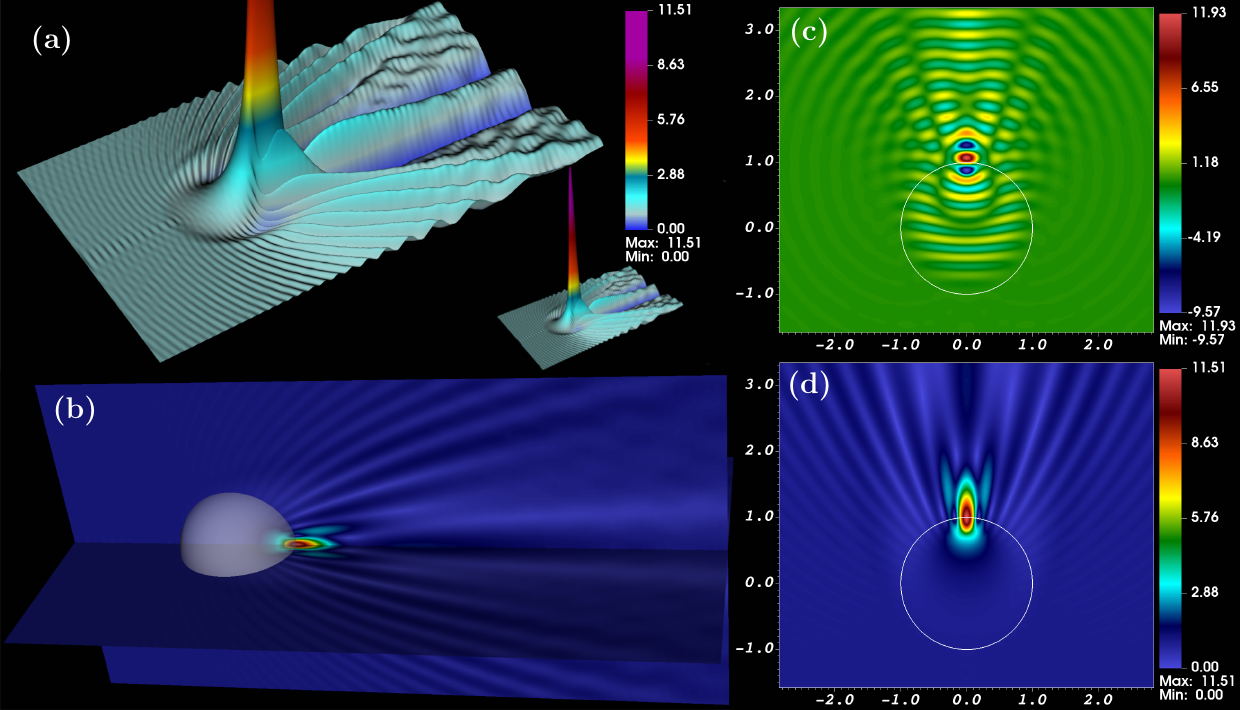}
  \caption{Plane wave transmission through a Luneburg lens. \emph{(a)} A
           plane wave travels from left to right and the total field 
           magnitude $|\vTot|$ peak reveals the focusing of the transmitted
           wave just after it exits the lens. \emph{(b)} Total field magnitude
           cross-sections highlighting the plane wave focusing through the 
           spherical gradient index lens.
           \emph{(c)} Real part of the scattered field 
           with the lens removed reveals the internal wave interaction
           inside the lens. \emph{(d)} 
           Close-up of a cross-section of the total field magnitude $|\vTot|$ 
           which confirms the correct location of the focusing region centered
           on the lens boundary.} 
  \label{fig:Luneburg}
\end{figure}

In this final example we simulate plane wave transmission through a Luneburg
lens. The type of Luneburg lens we consider is a single unit sphere with a
refraction coefficient that varies only along the radial direction.  Luneburg
lenses were first investigated for their electromagnetic radiation focusing
properties~\cite[Ch.3]{Luneburg1966} but, more recently, they have found
applications in acoustic wave manipulation in a diversity of
media~\cite{ZhaoEtAl2023}.

For this example we solve the three-dimensional Helmholtz
equation~\eqref{eq:HelmScat} for the scattered (transmitted) field $\vSca(\bx)$
where the incident field is a plane wave $\vInc(\bx) = e^{i\kappa x_1}$ with
$\kappa = 20.3$.  The domain $\Dom = [-3,3] \times[-3,3] \times [-3,8]$ is
represented with a uniform grid that consists of $818 \times 818 \times 1498 =
1,002,347,752$ points; the spatial step size is approximately equal along every
dimension: $\Delta x = \Delta x_1 = \Delta x_2 \approx \Delta x_3 \approx
7.3\cdot 10^{-3}$. The exponential time-stepping parameters were set to $T =
\kappa L,\, \Delta t_0 = 2\cdot 10^{-4}$ and $\Delta t_T = 4 \cdot 10^{-2}$,
where $L = 11$ (the longest length of the domain). The lens is a unit sphere
centered at the origin and the refraction coefficient in the domain is
\begin{equation}
  \refco(r) = 
  \begin{cases}
  2 - r^2, & 0 \leq r \leq 1, \\
  1,       & r > 1,
  \end{cases}
\end{equation}
where $r = \sqrt{ x_1^2 + x_2^2 + x_3^2 }$. The OFT was integrated in
pseudo-time for $71,400$ steps to satisfy a variable-coefficient Helmholtz
equation residual tolerance of $10^{-2}$.

Figure~\ref{fig:Luneburg}(a) shows plane wave transmission through the 
Luneburg lens; the total field magnitude $|\vTot|$ peak, which achieves a
maximum value of $11.51$, reveals the focusing of the transmitted wave as it
exits the lens.  The two cross-sections of the total field magnitude displayed
in Figure~\ref{fig:Luneburg}(b) highlight the plane wave focusing through the
spherical gradient index lens.  Figures~\ref{fig:Luneburg}(c)
and~\ref{fig:Luneburg}(d) depict cross-sections of the real part of the
scattered field and the total field magnitude, respectively, to reveal the
internal wave interaction inside the lens and confirm the
theoretically-predicted location of the focusing region which should be
centered on the lens boundary.

\section{Conclusions}\label{sec:conclusion}

We introduced novel numerical algorithms for the direct solution of the 1D, 2D,
and 3D Helmholtz equation with a spatially-dependent refraction coefficient
based on an Operator Fourier Transform (OFT) representation of
pseudodifferential operators ($\PDO$).  Our solution approach relies on
expressing the inverse Helmholtz operator in terms of two sequential
applications of an inverse square root pseudodifferential operator. The action
of each inverse square root $\PDO$ on a given function has a simple
representation in the OFT framework: The $\PDO$ is an integral of operators
applied to a function which can be evaluated in terms of solutions of a
pseudo-temporal initial-boundary-value problem for a paraxial equation.  This
framework offers several advantages over traditional iterative approaches for
the Helmholtz equation. The operator integral transform is amenable to standard
quadrature methods and the required pseudo-temporal paraxial equation solutions
can be obtained using any suitable numerical method.

The numerical results presented, which included 2D plane wave scattering from a
complex material inhomogeneity, 3D scattering from turbulent channel flow, and
plane wave transmission through a Luneburg spherical gradient-index
lens, demonstrated that our exponentially-spaced OFT quadrature and our BDF-ADI
finite difference solvers for the paraxial IBVP lead to accurate and efficient
algorithms that can produce solutions of the Helmholtz in non-trivial settings,
even for three-dimensional problems with more than one billion complex
unknowns, on a single present-day workstation.

For simplicity, in this work we presented applications of the OFT using only
2nd order finite differences and a 1st-order implicit time-marching method, but
the OFT methodology is by no means limited to low order numerical algorithms
for the solution of the underlying paraxial equation problems.  In fact,
paraxial equation solvers based on higher-order hybrid implicit-explicit
time-marching schemes and multi-domain spectral spatial approximations
currently in development will demonstrate the applicability of the OFT approach
to problems of scientific and engineering interest that are orders of magnitude
larger than the numerical demonstrations presented in this work.

\subsection*{Acknowledgments}
Approved for public release; distribution is unlimited. AFRL Public Affairs
release approval number AFRL-2024-3739.

The authors gratefully acknowledge support from the Air Force Office of
Scientific Research through grant number 23RDCOR004.

\appendix

\section{Exact solution for non-reflecting boundary conditions} \label{sec:ExactSoln}

In this section we describe the exact solution of the initial-boundary-value
problem 
\begin{equation} \label{eq:IBVP}
  \begin{cases}
  \displaystyle u_t(x,t) 
  = \frac{\ii}{\kappa^2} u_{xx}(x,t), & (x,t) \in (x_{\ell},x_r) \times (0,\infty), \\[1ex]
  \alpha u(x,t) - \ii u_x(x,t) = 0, & (x,t) \in \{ x_{\ell} \} \times (0,\infty), \\
  \alpha u(x,t) + \ii u_x(x,t) = 0, & (x,t) \in \{ x_{r} \}    \times (0,\infty), \\
  u(x,0) = g(x), & x \in [x_{\ell},x_r],
  \end{cases}
\end{equation}
for real $\alpha > 0$ by separation of variables. That is, if we can find a
complete set of eigenvalues $-\lambda_n^2$ and eigenfunctions $\varphi_n(x)$ of
the operator $Lu = u_{xx}$ with the boundary conditions specified above, then
we may expand the initial condition as
\begin{equation} \label{eq:GExpansion}
  g(x) = \sum_{n=1}^{\infty} c_n \varphi_n(x),
\end{equation}
and the solution of equation~\eqref{eq:IBVP} is
\begin{equation} \label{eq:SepOfVarSolution}
  u(x,t) = \sum_{n=1}^{\infty} c_n e^{-\ii (\lambda_n / \kappa)^2 t} \varphi_n(x).
\end{equation}
The pseudodifferential operator applied to $g$ can then be computed as
\begin{equation}
  \frac 1{\sqrt{\kappa^2 + \partial_x^2}} \, g(x) = \sum_{n=1}^{\infty} \frac{c_n}{\sqrt{\kappa^2 - \lambda_n^2}} \varphi_n(x).
\end{equation}
%

\subsection{Eigenvalues and eigenfunctions} \label{sec:EigenProblem}

The eigenvalue problem is
\begin{equation}
  v'' = -\lambda^2 v, \quad
  \begin{cases}
    \ii \alpha v(x_{\ell}) + v'(x_{\ell}) = 0, \\
    \ii \alpha v(x_r) - v'(x_r) = 0,
  \end{cases}
\end{equation}
but the operator is not self-adjoint due to the boundary conditions so the
usual Sturm-Liouville theory does not apply. However, the boundary conditions
are regular in the sense of~\cite{Naimark1968} and thus some of the same
results hold---in particular, the eigenvalues are countable with infinity being
the only accumulation point and the eigenfunctions form a complete basis on the
interval (see~\cite{Naimark1968} for further results). We will see that the
main relevant differences from the usual Sturm-Liouville theory are that the
eigenvalues are complex and the eigenfunctions are not orthogonal.  

To solve the eigenvalue problem, we substitute into the boundary conditions the
ansatz
\begin{equation}
  v(x) = a \cos \lambda (x-x_{\ell}) + b \sin \lambda (x-x_{\ell}),
\end{equation}
which satisfies the differential equation for any constants $a$ and $b$. This
leads to the system of equations
\begin{equation}
\begin{cases}
    \ii \alpha a + b \lambda = 0, \\
    (\ii \alpha a - b \lambda) \cos \lambda L + (\ii \alpha b + a \lambda) \sin \lambda L = 0,
\end{cases}
\end{equation}
where $L = x_r - x_{\ell}$. Let $C = (1+\alpha^2/|\lambda|^2)^{-1/2}$ be a
normalization constant and set $a = C$ and $b = -\ii C \alpha / \lambda$, which
satisfies the first equation. The second equation then gives us the relation
for the eigenvalues,
\begin{equation} \label{eq:LambdaFun}
  f(\lambda) = (\alpha^2 + \lambda^2) \sin L\lambda + 2 \ii \alpha \lambda \cos L\lambda = 0 
\end{equation}
Note that the equations for $\lambda$ and $-\lambda$ are the same, as is the
equation for the eigenfunction in each case, so we may restrict our attention
to eigenvalues with non-negative real part only and index them by positive
integers $n \geq 1$.  Asymptotically as $|\lambda| \rightarrow \infty$ the
equation becomes $\sin \lambda L \sim 0$ which has the solution $\lambda \sim n
\pi / L$.  The eigenfunctions are asymptotically given by $\varphi_n(x) \sim
\cos \big( (n\pi/L) (x - x_{\ell})\big)$. In addition to the eigenvalues
corresponding asymptotically to the zeros of $\sin \lambda L$, there is one
corresponding to the zeros of $(\alpha^2 + \lambda^2)$. The eigenvalue $\lambda
= 0$ has eigenfunction $\varphi = 0$ so it is trivial and we do not include it
in the enumeration. The following propositions show that these are all of the
eigenvalues and that they all have negative imaginary part.

\begin{prop} \label{prop:Rouche}
There are positive numbers $Y_0>\alpha$ and $n_0$ such that for any integer $n >
n_0$ and real $Y > Y_0$ the number of roots of equation~\eqref{eq:LambdaFun} in
the rectangle $R = R(n,Y) = \{x+\ii y \in \C\, :\, |x| < (n+1/2)\pi/L,\, |y| <
Y \}$ is $2n+3$.
\end{prop}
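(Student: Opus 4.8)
The plan is to prove this by Rouch\'e's theorem, comparing $f(\lambda)$ to a suitable ``dominant'' trigonometric function on the boundary of the rectangle $R(n,Y)$. First I would rewrite $f(\lambda) = (\alpha^2 + \lambda^2)\sin L\lambda + 2\ii\alpha\lambda\cos L\lambda$ and identify the candidate comparison function; the natural choice is $g(\lambda) = (\alpha^2+\lambda^2)\sin L\lambda$, or equivalently $\lambda^2 \sin L\lambda$ for the part of the boundary where $|\lambda|$ is large, since the perturbation term $2\ii\alpha\lambda\cos L\lambda$ is of lower order in $\lambda$ there. The count $2n+3$ should emerge as follows: $\sin L\lambda$ has zeros at $\lambda = k\pi/L$ for $k = -n,\dots,n$ inside the rectangle (that is $2n+1$ zeros, once $Y$ and $n$ are large enough that all other relevant features are captured), and $\alpha^2+\lambda^2$ contributes its two zeros $\lambda = \pm\ii\alpha$, which lie in $R$ precisely because $Y > Y_0 > \alpha$. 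Adding these gives $2n+1+2 = 2n+3$.

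The key steps, in order: (1) On the vertical sides $|x| = (n+1/2)\pi/L$, use that $|\sin L\lambda|$ is bounded below by a positive constant (indeed $|\sin L\lambda| \ge |\cos(Lx)|$-type bounds give $|\sin L\lambda| \ge c\cosh(Ly) \ge c$ there, since $x = (n+1/2)\pi/L$ makes the real part of $L\lambda$ an odd multiple of $\pi/2$), so $|g(\lambda)| = |\alpha^2+\lambda^2||\sin L\lambda| \gtrsim |\lambda|^2$ while $|f-g| = 2\alpha|\lambda||\cos L\lambda| \lesssim |\lambda|e^{L|y|}$; but on this side $|\lambda|^2 |\sin L\lambda| \gtrsim |\lambda|^2 \cosh(Ly) \gtrsim |\lambda| e^{L|y|}\cdot|\lambda|$, so for $n > n_0$ large the ratio $|f-g|/|g| < 1$. (2) On the horizontal sides $|y| = Y$, both $\sin L\lambda$ and $\cos L\lambda$ have magnitude comparable to $\tfrac12 e^{LY}$, and more precisely $|\cos L\lambda / \sin L\lambda| = |\coth(\cdot)|$-type quantity is close to $1$; here one needs $|(\alpha^2+\lambda^2)\sin L\lambda| > |2\ii\alpha\lambda\cos L\lambda|$, i.e. roughly $|\alpha^2+\lambda^2| > 2\alpha|\lambda|\cdot|\cot L\lambda|$, and since $|\cot L\lambda| \to 1$ as $|\text{Im}(L\lambda)| \to \infty$ while $|\alpha^2+\lambda^2|/(2\alpha|\lambda|) \to \infty$, choosing $Y_0$ large makes this hold for all $Y > Y_0$ uniformly on the horizontal segments. (3) Conclude by Rouch\'e that $f$ and $g$ have the same number of zeros in $R$, namely $2n+3$, counting multiplicity.

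The main obstacle I expect is getting the estimates on the two families of boundary segments to be genuinely uniform and compatible — in particular, decoupling the roles of $n$ and $Y$. On the vertical sides the ``safe'' lower bound on $|\sin L\lambda|$ degrades as $|y|$ grows (it grows like $\cosh Ly$, which is good), but one must simultaneously ensure the comparison holds all the way up to $|y| = Y$ for every admissible $Y > Y_0$; on the horizontal sides the needed inequality improves as $Y$ grows but must hold for all $|x| \le (n+1/2)\pi/L$ including near $x = 0$ where $|\lambda|$ is smallest — this is exactly why a threshold $Y_0 > \alpha$ (so that $\pm\ii\alpha$ are enclosed and $|\alpha^2+\lambda^2|$ does not vanish on the contour) is forced. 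I would handle this by first fixing $Y_0$ from the horizontal-side analysis (depending only on $\alpha$ and $L$), then choosing $n_0$ from the vertical-side analysis (depending on $\alpha$, $L$, and $Y_0$); a short lemma recording the elementary bound $\tfrac12 e^{L|y|} \le |\sin(L(x+\ii y))|, |\cos(L(x+\ii y))| \le e^{L|y|}$ valid when $|y|$ is bounded away from $0$, together with the exact-zero property of $\sin L\lambda$ on the vertical lines $x = (n+1/2)\pi/L$, should make both estimates routine once the quantifier order is pinned down.
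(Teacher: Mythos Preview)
Your proposal is correct and follows essentially the same approach as the paper: the paper applies Rouch\'e's theorem with exactly your decomposition $f_1(\lambda)=(\alpha^2+\lambda^2)\sin L\lambda$ dominant and $f_2(\lambda)=2\ii\alpha\lambda\cos L\lambda$ subordinate on $\partial R$, using on the vertical edges that $|\sin L\lambda|=\cosh Ly$, $|\cos L\lambda|=|\sinh Ly|$ so that $|f_1|>|f_2|$ once $|\lambda|$ (hence $n$) is large, and on the horizontal edges the crude bounds $|f_1|\ge\tfrac12(|z|^2-\alpha^2)(e^{LY}-1)$, $|f_2|\le\alpha|z|(e^{LY}+1)$ for $Y$ large, then counts the $2n+1$ zeros of $\sin L\lambda$ together with $\pm\ii\alpha$ to get $2n+3$. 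Your attention to the quantifier order is more careful than the paper's brief treatment, but the two thresholds in fact decouple (the vertical-edge inequality depends only on $n$ via $|z|\ge(n+\tfrac12)\pi/L$, not on $Y$), so $n_0$ need not depend on $Y_0$.
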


\begin{proof}
This follows from Rouch\'e's theorem. Let $f_1(z) = (\alpha^2 + z^2) \sin Lz$
and $f_2(z) = 2\ii \alpha z \cos Lz$. On the right and left edges of the
rectangle we have $|f_1(z)| = |\alpha^2 + z^2| \cosh LY > |2 \alpha z| \sinh LY
= |f_2(z)|$ for large enough $|z|$. On the top and bottom edges we have
$|f_1(z)| \geq \frac 12 (|z|^2 - \alpha^2) (e^{LY} - 1)$ and $|f_2(z)| \leq
\alpha |z| (e^{LY} + 1)$ for large enough $Y$ and $|z|$, so that $|f_1| >
|f_2|$ for large enough $Y$ and $|z|$.  Therefore, $|f_1| > |f_2|$ on $\partial
R$ so $f = f_1 + f_2$ has the same number of zeros as $f_1$ inside $R$, which
is $2n+3$.
\end{proof}

\begin{figure}[h!]
  \centering
  \includegraphics[width=.5\textwidth]{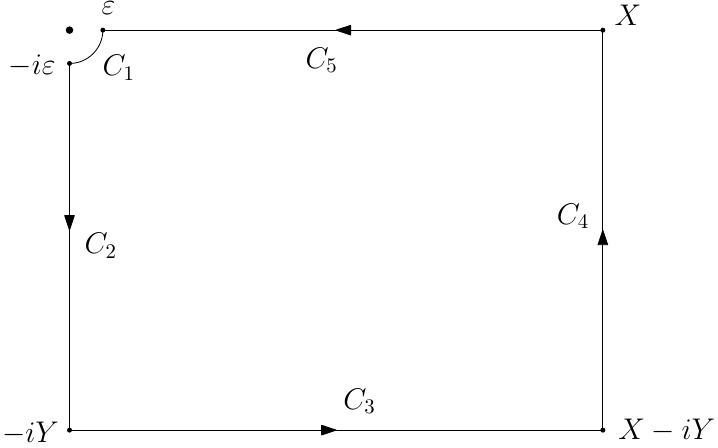}
  \caption{Integration contour for Proposition~\ref{prop:Winding}.} 
  \label{fig:Contour}
\end{figure}

\begin{prop} \label{prop:Winding}
If $z$ is a root of equation~\eqref{eq:LambdaFun} with positive real part, then
the imaginary part of $z$ is negative.
\end{prop}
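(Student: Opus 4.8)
The plan is to avoid contour integration and instead read off the sign of $\im z$ from a Green's (energy) identity for the eigenfunction attached to the root; at the end I note the argument-principle route suggested by Figure~\ref{fig:Contour}. First I would observe that a root $z$ of equation~\eqref{eq:LambdaFun} with $\re z > 0$ is in particular nonzero, so the construction preceding the proposition furnishes a genuine, nontrivial eigenfunction
\begin{equation*}
  v(x) = C\cos z(x-x_{\ell}) - \frac{\ii C\alpha}{z}\,\sin z(x-x_{\ell}), \qquad C = \big(1 + \alpha^2/|z|^2\big)^{-1/2} > 0 ,
\end{equation*}
which solves $v'' = -z^2 v$ on $(x_{\ell},x_r)$ and both impedance conditions $\ii\alpha v(x_{\ell}) + v'(x_{\ell}) = 0$ and $\ii\alpha v(x_r) - v'(x_r) = 0$ (the second being exactly the statement $f(z)=0$), and for which $v(x_{\ell}) = C \neq 0$, so $v \not\equiv 0$ and $\int_{x_{\ell}}^{x_r}|v|^2\,dx > 0$.

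Next I would multiply $v'' = -z^2 v$ by $\overline{v}$, integrate over $[x_{\ell},x_r]$, and integrate by parts once. The boundary conditions give $v'(x_r) = \ii\alpha v(x_r)$ and $v'(x_{\ell}) = -\ii\alpha v(x_{\ell})$, so the boundary term collapses to $\big[\,v'\overline v\,\big]_{x_{\ell}}^{x_r} = \ii\alpha\big(|v(x_r)|^2 + |v(x_{\ell})|^2\big)$, which yields
\begin{equation*}
  z^2 \int_{x_{\ell}}^{x_r} |v|^2\,dx \;=\; \int_{x_{\ell}}^{x_r} |v'|^2\,dx \;-\; \ii\alpha\big(|v(x_r)|^2 + |v(x_{\ell})|^2\big) .
\end{equation*}
Taking imaginary parts shows $\im(z^2)\int_{x_{\ell}}^{x_r}|v|^2\,dx = -\alpha\big(|v(x_r)|^2 + |v(x_{\ell})|^2\big) \le 0$; since the integral is positive and $v(x_{\ell}) = C \neq 0$, this forces $\im(z^2) < 0$.

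To finish, write $z = r e^{\ii\phi}$ with $r>0$ and, because $\re z > 0$, $\phi \in (-\pi/2,\pi/2)$; then $\im(z^2) = r^2 \sin 2\phi < 0$ forces $\phi \in (-\pi/2,0)$, hence $\im z = r\sin\phi < 0$, as claimed. I do not expect a real obstacle on this route — the only care needed is that the root genuinely produces a nontrivial ODE solution satisfying both boundary conditions (guaranteed by $z\neq 0$ and $v(x_{\ell})=C\neq 0$) and the bookkeeping of signs in the boundary term. The alternative closer to the figure would apply the argument principle to $f$ around the first quadrant, indented at the origin (where $f(\lambda)\sim \alpha(\alpha L + 2\ii)\lambda$, a simple zero) and truncated at radius $R$, and show the winding number of $f$ is zero; there the genuine difficulty is controlling $\arg f$ on the segment hugging the positive real axis, where $\sin L\lambda$ passes through zeros while $\cos L\lambda$ does not, so the large-$|\lambda|$ asymptotics $f \sim \lambda^2 \sin L\lambda$ are not uniform — a complication the energy identity sidesteps.
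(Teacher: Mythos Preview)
Your proof is correct and takes a genuinely different route from the paper. The paper proves the proposition by an argument-principle computation: it encloses the fourth quadrant with the contour in Figure~\ref{fig:Contour}, tracks $\arg f$ along each of the five pieces $C_1,\dots,C_5$, and shows the winding number of $f$ around this region is $2n+1$; by the count from Proposition~\ref{prop:Rouche} and the $\lambda\mapsto-\lambda$ symmetry of $f$, these exhaust all roots with positive real part, hence none lie in the upper half-plane. Your approach instead pairs each root $z$ with its eigenfunction $v$ and reads off $\im(z^2)<0$ from the Green's identity $z^2\int|v|^2 = \int|v'|^2 - \ii\alpha\big(|v(x_r)|^2+|v(x_\ell)|^2\big)$, which is immediate once the boundary terms are evaluated. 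This is considerably shorter, self-contained (it does not invoke the Rouch\'e count), and yields the slightly sharper conclusion $\im(z^2)<0$; the paper's contour argument, by contrast, fits more naturally with the zero-counting framework of Proposition~\ref{prop:Rouche} and is the kind of technique that survives when no convenient energy identity is available. Your closing remark correctly identifies the delicate part of the contour route---the behaviour of $\arg f$ along the real axis where $\sin L\lambda$ vanishes---which the paper handles by direct oscillation counting on $C_5$.
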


\begin{proof}
We compute the winding number of the contour $C = C_1 + C_2 + C_3 + C_4 + C_5$
depicted in Fig.~\ref{fig:Contour} and show that it accounts for all the
eigenvalues in the right-half plane as found in Proposition~\ref{prop:Rouche}.
Let $X = (2n + 1/2)\pi/L > \alpha$ for some positive integer $n$ and let $Y >
0$. The contours are given by $C_1 = \{\veps e^{\ii \theta} \,:\,\theta \in
(-\pi/2,0)\}$, $C_2 = \{ -\ii y \,:\, y \in (\veps,Y)\}$, $C_3 = \{ x - \ii Y
\,:\, x \in (0,X) \}$, $C_4 = \{ X - \ii y \,:\, y \in (0,Y)\}$, $C_4 = \{x
\,:\, x \in (\veps,X)\}$ and we consider the limit $\veps \rightarrow 0$, $Y
\rightarrow \infty$.  Let $w_j$ be the winding number along each contour.
Along $C_1$, $f(z) \sim (\alpha^2 L + 2\ii \alpha) z$ as $\veps \rightarrow 0$
so the winding number is simply $w_1 = -1/4$. Along $C_2$ we have 
\begin{align*}
  f(-\ii y) &= -\ii (\alpha^2 - y^2 ) \sinh Ly + 2\alpha y \cosh Ly, \\
  f(-\ii \veps) &\sim 2\alpha \veps - \ii \alpha^2 L\veps, \\
  f(-\ii Y) &\sim \frac{\ii}{2} Y^2 e^{LY}.
\end{align*}
The real part is always positive and the imaginary part is monotonically
increasing, so the winding number is $w_2 = 1/4 + \theta_0/(2\pi)$, where $\theta_0
= |\Arg(2\alpha - \ii \alpha^2L)|$. Along $C_3$ we have 
\begin{align*}
  f(x - \ii Y) \sim \frac 12 Y^2 e^{LY}( -\sin Lx + \ii \cos Lx ).
\end{align*}
From $x = 0$ to $x = X = (2n + 1/2) \pi/L$ we have that the winding number is
$w_3 = n + 1/4$. Along $C_4$ we have
\begin{align*}
  f(X-\ii y) &= (\alpha^2 + X^2 - y^2) \cosh Ly - 2\alpha X \sinh Ly -2\ii y ( X \cosh Ly - \alpha \sinh Ly ), \\
  f(X-\ii Y) &\sim -\frac 12 Y^2 e^{LY}, \\
  f(X) &= \alpha^2 + X^2.
\end{align*}
The imaginary part is always negative for large enough $X$, so the winding
number is $w_4 = 1/2$.  Finally, along $C_5$ we have
\begin{align*}
  f(x) &= (\alpha^2 + x^2) \sin Lx + 2\ii \alpha x \cos Lx, \\
  f(\pi/(2L)) &= \alpha^2 + \Big( \frac{\pi}{2L} \Big)^2, \\
  f(\veps) &\sim \alpha^2 L \veps + 2\ii \alpha \veps.
\end{align*}
From $x = X$ to $x = \pi/(2L)$ there are $n$ oscillations; from $x = \pi/(2L)$
to $x = \veps$ the real and imaginary parts are positive. Since $\alpha^2 L +
2\ii \alpha = \ii( 2\alpha - \ii \alpha^2 L)$, we have $w_5 = n + 1/4 -
\theta_0/(2\pi)$. Therefore, the total winding number for the whole contour is
$w_C = \sum_{j=1}^5 w_j = 2n + 1$, so these are the number of zeros in the
lower-left quadrant of the complex plane. By Proposition~\ref{prop:Rouche} and
the symmetry of $f(z)$, these are all the zeros in the region $\{ z \,:\, 0 <
\re(z) < X \}$.
\end{proof}

The eigenvalues can be found numerically. Once these are obtained, the inner
product of $\varphi_m$ and $\varphi_n$ is given by
\begin{align}
  (\varphi_m,\varphi_n) &= \int_{x_{\ell}}^{x_r} \overline \varphi_m(x) \varphi_n(x) \,dx \nonumber \\
    &= C_m C_n \bigg( \frac{2\ii \alpha (1 - \cos L\lbar_m \cos L\lambda_n)}{\lbar_m^2 - \lambda_n^2} + \frac{(\alpha^2 + \lbar_m^2) \sin L\lbar_m \cos L\lambda_n}{\lbar_m(\lbar_m^2 - \lambda_n^2)} \nonumber \\
    &\quad - \frac{(\alpha^2 + \lambda_n^2) \cos L\lbar_m \sin L\lambda_n}{\lambda_n(\lbar_m^2 - \lambda_n^2)} - \frac{\ii \alpha (\lbar_m^2 + \lambda_n^2) \sin L\lbar_m \sin L\lambda_n}{\lbar_m \lambda_n(\lbar_m^2 - \lambda_n^2)} \bigg).
\end{align}
If we truncate the expansion in equation~\eqref{eq:GExpansion} up to $N$ terms,
then the coefficients are the solution of the $N\times N$ linear system $Ac=b$,
where $c$ is the vector of coefficients and the matrix $A$ and right-hand side
vector $b$ have entries 
\begin{equation}
  A_{mn} = (\varphi_m,\varphi_n), \quad b_m = (\varphi_m,g)
\end{equation}
The inner products for $b$ can be computed numerically using, e.g., F\'ejer
quadrature.

We will see that the long-time behavior of the
solution~\eqref{eq:SepOfVarSolution} will depend primarily on the ``low
frequency'' modes of the expansion. Therefore, we determine the asymptotic
behavior of the eigenvalues when $\alpha \gg \lambda$.  In this regime
equation~\eqref{eq:LambdaFun} is asymptotically $\sin \lambda L \sim 0$, which
has the solution $\lambda \sim n\pi/L$. Let $\lambda_n = (n\pi + \veps_n)/L$
and let $\delta = 1/(\alpha L)$. Substituting into
equation~\eqref{eq:LambdaFun} we have
\begin{equation}
  \tan \veps_n = -\frac{2 \ii \delta (n\pi + \veps_n)}{1 + \delta (n\pi + \veps_n)^2}.
\end{equation}
The right-hand side tends to zero as $\delta \rightarrow 0$, so we may expand
the left-hand side in powers of $\veps_n$. Retaining only first order terms in
$\delta$ and $\veps_n$ we have
\begin{equation} \label{eq:LowFreqEig}
  \veps_n \sim - \ii \frac{2n\pi}{\alpha L} \quad \Longrightarrow \quad -\ii \lambda_n^2 \sim -\Big(\frac{n\pi}L \Big)^2 \Big(  \frac{4}{\alpha L} + \ii \Big).
\end{equation}
Note that $\veps_n \ll 1$ implies $n \ll \alpha L$; this defines the low
frequency regime. To first order in $\delta$, the eigenfunctions in this regime
are given by
\begin{align}
  \varphi_n(x) \sim \frac{-\ii \alpha/\lambda_n}{\sqrt{1+\alpha^2/|\lambda_n|^2}} \sin \lambda_n(x-x_{\ell}) \sim \Big( \frac 2{\alpha L} - \ii \Big) \sin \lambda_n(x-x_{\ell})
\end{align}
%

\subsection{Paraxial equation asymptotic solution} \label{sec:ParaxialAsymptotics}

We now set $\alpha = \kappa$ (which is the value of interest for non-reflecting
boundary conditions of the Helmholtz equation) and derive the asymptotic
behavior of the solution~\eqref{eq:SepOfVarSolution} in the regime $t > (\kappa
L)^2/(2\pi)$, $\kappa L \gg 1$. We require only the lowest order algebraic
term, as well as the lowest order terms in both the real and imaginary parts of
exponentials.  For large $t$, the exponential term inside the sum decays and
oscillates rapidly with increasing $n$, so we expect the sum in the solution to
depend mostly on a neighborhood of $n=1$. Let $\delta = \kappa L$ and $X = (x -
x_{\ell})/L$. The eigenfunctions are simply $\varphi_n(x) \sim -\ii \sin\big(
\lambda_n X/(\kappa\delta) \big)$.  Writing sine as a sum of exponentials,
combining these with the time dependent exponential part of the solution and
completing the square we have
\begin{equation}
  -\ii t \Big(\frac{\lambda_n}{\kappa} \Big)^2 \pm \ii \frac{\lambda_n}{\kappa} \frac{X}{\delta} = \ii \frac{X^2}{4\delta^2 t} - t \theta_n^{\pm}, \quad \theta_n^{\pm} = \ii \Big( \frac{\lambda_n}{\kappa} \mp \frac{X}{2\delta t} \Big)^2.
\end{equation}
Therefore, we may write the solution as
\begin{equation} \label{eq:UAsymSSum}
  u(x,t) = \frac 12 e^{\ii \frac{X^2}{4\delta^2 t}} ( S^- - S^+ ), \quad S^{\pm} = \sum_{n=1}^{\infty} c_n e^{-t \theta_n^{\pm}}. 
\end{equation}
In the low-frequency regime, the eigenvalues take the form
\begin{equation}
  \frac{\lambda_n}{\kappa} \sim n\pi \delta (1-2\ii \delta).
\end{equation}
Let $s = s(n) = n\pi\delta$, $\Delta s = \pi\delta$, and $\phi^{\pm}(s) = \theta^{\pm}_n$,
which is asymptotically
\begin{equation}
  \phi^{\pm}(s) \sim (4\delta + \ii) \Big( s \mp (1+2\ii\delta) \frac{X}{2\delta t} \Big)^2.
\end{equation}
Then the summations are given by
\begin{equation}
  S^{\pm} \sim \frac 1{\pi \delta} \sum_{n=1}^{\infty} c_n e^{-t\phi^{\pm}(s)} \Delta s.
\end{equation}
Assume that the coefficients vary slowly such that $c_{n+1} - c_n = \mathcal
O(\delta)$ and for simplicity also assume that $c_1 \neq 0$. Let $\gamma(s)$ be
a smooth continuous approximation of the coefficients, the details of which are
unimportant when $c_1 \neq 0$ (see Remark~\ref{rem:GeneralCoeffs} for the
general case). Then the summations are an approximation of the integrals
\begin{equation} \label{eq:SIntegral}
  S^{\pm} \sim \frac 1{\pi \delta} \int_{\pi\delta}^{\infty} \gamma(s) e^{-t\phi^{\pm}(s)}  \,ds.
\end{equation}
Asymptotically, the main contribution to the integral is around the endpoint $s
= \pi\delta$, under the assumption $t > (2\pi \delta^2)^{-1}$. Therefore, the
integral is asymptotically given by

\begin{align}
  S^{\pm} &\sim \frac{\gamma(\pi \delta)}{\pi \delta} \int_{\pi\delta}^{\infty} e^{-t\phi^{\pm}(s)} \,ds \nonumber \\
    &= \frac{c_1}{\pi \delta} \frac 12 \sqrt{ \frac{\pi}{(4\delta + \ii)t} } \erfc \Big( \sqrt{ (4\delta + \ii)t^{-1} }\big(\pi\delta t \mp ( (2\delta)^{-1} + \ii) X \big) \Big) \nonumber \\
    &\sim \frac{c_1 e^{-\ii \frac{X^2}{4\delta^2 t}}}{2\pi\delta^2 t \mp X} e^{\pm\ii\pi X} e^{-(\pi\delta)^2(4\delta + \ii) t}.
\end{align}
Substituting this into equation~\eqref{eq:UAsymSSum}, we have
\begin{equation} \label{eq:UAsym}
  u(x,t) \sim \frac{c_1}{2\pi\delta^2 t} e^{-(\pi\delta)^2(4\delta + \ii) t} \Big( \frac{e^{-\ii\pi X}}{2+X/(\pi\delta^2 t)} - \frac{e^{\ii\pi X}}{2-X/(\pi\delta^2 t)} \Big).
\end{equation}
It follows that in $d$ dimensional space the paraxial solution in a rectangle with dimensions $L_1 \times \dots \times L_d$ is asymptotically
\begin{equation} \label{eq:UAsymDDim}
  u(\bx,t) \sim t^{-d} e^{-(a_d + \ii b_d) t} h(\bx,t),
\end{equation}
where $h(\bx,t)$ is slowly-varying in $t$ and
\begin{equation} \label{eq:AdBd}
a_d = \frac{4\pi^2}{\kappa^3} \sum_{\ell=1}^d \frac 1{L_\ell^3}, \quad b_d = \frac{\pi^2}{\kappa^2} \sum_{\ell=1}^d \frac 1{L_\ell^2}
\end{equation}

\begin{rem} \label{rem:GeneralCoeffs}
For general expansion coefficients $c_n$, we define $\gamma(s)$ using a Taylor
series approximation centered at $s = \Delta s$ ($= \pi \delta$) with the
derivatives of $\gamma(s)$ given by forward divided differences of the
coefficients:
\begin{align}
  \gamma(s) &= \sum_{k=0}^{\infty} \frac{\Delta^k c_1}{k!} (s-\Delta s)^k, \\
  \Delta^0 c_1 &= c_1, \quad \Delta^{k+1} c_1 = \frac{\Delta^k c_2 - \Delta^k c_1}{\Delta s}, \quad k \geq 1.
\end{align}
The integral in equation~\eqref{eq:SIntegral} is then asymptotically given by
applying a form of Watson's lemma for complex phase function.
\end{rem}

\section{Proof of error estimate for numerical OFT} \label{sec:ErrorProof}

In this appendix we present the proof of the error estimate given in
Section~\ref{sec:Error}.

\begin{proof}[Proof of Theorem~\ref{thm:Error}]
In Section~\ref{sec:OFTQuad} we defined the piecewise linear interpolation
operator for a sequence of functions. We extend that definition to a sequence
of operators $\{P_m\}$ by defining for any function $v$
\begin{equation}
  \I_n(P_m) v = \I_n( P_m v )
\end{equation}
We also extend $\I_n$ to continuous-time operators $P(t)$ by defining
$\I_n\big(P(t)\big) = \I_n\big( P(t_m) \big)$. We write the error as
\begin{align}
  \| f(A) - f^N(A) \| &= \Big\| \sqrt{\frac{-\ii}{\pi}} \intzeroinf \frac{e^{\ii\tau}}{\sqrt{\tau}} e^{\ii\tau (A-I)} \,d\tau - \sum_{n=0}^N \omega_n \wt S_n \Big\| \nonumber \\
    &= \frac 1{\sqrt{\pi}} \Big\| \sum_{n=0}^{N-1} \int_{t_n}^{t_{n+1}} \frac{e^{\ii\tau}}{\sqrt{\tau}} \big( e^{\ii\tau (A-I)} - \I_n(\wt S_m) \big) \,d\tau  + \int_{T}^{\infty} \frac{e^{\ii\tau}}{\sqrt{\tau}} e^{\ii\tau (A-I)} \,d\tau \Big\| \nonumber \\
    &\leq \frac{1}{\sqrt{\pi}} \sum_{n=0}^{N-1} \int_{t_n}^{t_{n+1}} \frac{1}{\sqrt{\tau}} \Big\| e^{\ii\tau (A-I)} - \I_n\big( e^{\ii\tau(A-I)} \big) \Big\| \,d\tau \nonumber \\
    &\quad + \frac{1}{\sqrt{\pi}} \sum_{n=0}^{N-1} \int_{t_n}^{t_{n+1}} \frac{1}{\sqrt{\tau}} \Big\| \I_n\big( e^{\ii\tau (A-I)} - \wt S_m \big) \Big\| \,d\tau \nonumber \\
    &\quad + \frac{1}{\sqrt{\pi}} \int_{T}^{\infty} \frac{1}{\sqrt{\tau}} \big\| e^{\ii\tau (A-I)} \big\| \,d\tau.
\end{align}
Let $E_1$, $E_2$, and $E_3$ denote the three terms above, in order. We bound
each of these terms. The third one is bounded simply by
\begin{equation}
  E_3 = \frac{1}{\sqrt{\pi}} \int_{T}^{\infty} \frac{1}{\sqrt{\tau}} \big\| e^{\ii\tau (A-I)} \big\| \,d\tau \leq \frac{\erfc\big( \sqrt{ \sigma T } \big)}{\sqrt{\sigma}} \sim \frac{e^{-\sigma T}}{\sigma \sqrt{\pi T}}.
\end{equation}

For the first term, we use the interpolation error formula over the interval
$(t_n,t_{n+1})$,
\begin{align}
\big\| \big( e^{\ii t (A-I)} - \I_n\big( e^{\ii t(A-I)} \big) \big) v \big\| &\leq \frac 12 (t-t_n)(t_{n+1}-t) \max_{(t_n,t_{n+1})} \Big\|\frac{d^2}{dt^2} e^{\ii t (A-I)} v \Big\| \nonumber \\
  &\leq \frac 12 (t-t_n)(t_{n+1}-t) e^{-\sigma t_n} \| A-I \|^2 \|v\|.
\end{align}
Therefore,
\begin{align}
  E_1 &\leq \sum_{n=0}^{N-1} \frac{e^{-\sigma t_n} \| A-I \|^2}{2\sqrt{\pi}} \int_{t_n}^{t_{n+1}} \frac{(\tau-t_n)(t_{n+1}-\tau)}{\sqrt{\tau}}\, d\tau \nonumber \\
\end{align}
For the first interval we have
\begin{equation}
  \int_{0}^{\Delta t_0} \frac{(\tau-t_n)(t_{n+1}-\tau)}{\sqrt{\tau}} \, d\tau = \frac 4{15}\Delta t_0^{5/2},
\end{equation}
and the remaining intervals are bounded by
\begin{equation}
  \int_{t_n}^{t_{n+1}} \frac{(\tau-t_n)(t_{n+1}-\tau)}{\sqrt{\tau}} \, d\tau \leq \frac 1{\sqrt{t_n}} \frac{\Delta t_n^3}{6},
\end{equation}
Thus we have
\begin{equation} \label{eq:d1Bound}
  E_1 \leq \frac{\| A-I \|^2}{2 \sqrt{\pi}} \bigg( \frac{4}{15} \Delta t_0^{5/2} + \frac{1}{6} \sum_{n=1}^{N} \frac{e^{-\sigma t_n} \Delta t_n^3}{\sqrt{t_n}} \bigg),
\end{equation}
where we added the $N$th term to the summation to simplify what follows.
Writing the last term as a lower Riemann sum, we bound it by 
\begin{align}
  \sum_{n=1}^{N} \frac{e^{-\sigma t_n} \Delta t_n^3}{\sqrt{t_n}} &= \frac{b^3\Delta t_0^3}{\sqrt{a}} \sum_{n=1}^{N} \frac{e^{-\sigma a(b^n-1)} b^{3(n-1)}}{\sqrt{b^n-1}} \nonumber \\
    &\leq \frac{b^3\Delta t_0^3}{\sqrt{a}} \int_0^{N} \frac{e^{-\sigma a(b^n-1)} b^{3n}}{\sqrt{b^n-1}} \,dn \nonumber \\
    &\leq \frac{b^3\Delta t_0^3}{\sqrt{a}} \int_0^{\infty} \frac{e^{-\sigma a(b^n-1)} b^{3n}}{\sqrt{b^n-1}} \,dn \nonumber \\
    &= \bigg( \frac{b\Delta t_0}{\sigma a} \bigg)^3 \frac{\sqrt{\pi \sigma}}{4 \log b} \big( 3 + 4 \sigma a (1+\sigma a) \big).
\end{align}
The asymptotic behavior is straightforward from this formula.

For the second term, we use the Taylor formula
\begin{align}
  e^{\ii t_n (A-I)} v &= e^{\ii t_{n+1} (A-I)} v - \ii \Delta t_n (A-I) e^{\ii t_{n+1} (A-I)} v - \frac{\Delta t_n^2}{2} (A-I)^2  e^{\ii \xi_n (A-I)} v \nonumber \\
    &= \big( I - \ii \Delta t_n (A-I) \big) e^{\ii t_{n+1} (A-I)} v - \frac{\Delta t_n^2}{2} (A-I)^2  e^{\ii \xi_n (A-I)} v, \label{eq:ExpTaylor}
\end{align}
where $\xi_n \in [t_n,t_{n+1}]$. We write the recurrence for the approximate
solution operator $\wt S_n$ as
\begin{equation}
  \wt S_n = \big(I - \ii \Delta t_n (A-I) \big) \wt S_{n+1},
\end{equation}
subtract it from equation~\eqref{eq:ExpTaylor}, and multiply by $B_n$ to get
the recurrence for the error,
\begin{equation}
\big( S_{n+1} - \wt S_{n+1} \big) v = B_n \big( S_n - \wt S_n \big) v + \frac{\Delta t_n^2}{2} (A-I)^2  B_n S(\xi_n) v.
\end{equation}
where $S_n = S(t_n)$ is the solution operator at time $t = t_n$.
Using this recurrence, we have
\begin{equation}
  \big( S_n - \wt S_n \big) v = \frac{(A-I)^2}{2} \sum_{k=0}^{n-1} \Delta t_k^2 \prod_{j=k}^{n-1} B_j S(\xi_k) v.
\end{equation}
Let $\rho_0 = \rho(\Delta t_T)$. It follows that the error is bounded by
\begin{align}
  \big\| S_n - \wt S_n \big\| &\leq E_2^{(n)} = \frac{\|A-I\|^2}{2} \sum_{k=0}^{n-1} \Delta t_k^2 e^{-\sigma t_k} \prod_{j=k}^{n-1} \frac{1}{1+\rho_0 \Delta t_j}.
\end{align}
Asymptotically, we have
\begin{equation}
  \prod_{j=k}^{n-1} \frac{1}{1+\rho_0 \Delta t_j} = \prod_{j=k}^{n-1} \big( e^{-\sigma \Delta t_j} + \Ord( \Delta t_j^2 ) \big) = e^{-\sigma(t_n - t_k)} + \Ord\bigg( \sum_{j=k}^{n-1} \Delta t_j^2 \bigg).
\end{equation}
Therefore,
\begin{align}
  E_2^{(n)} &\sim \frac{\|A-I\|^2}{2} e^{-\sigma t_n} \sum_{k=0}^{n-1} \Delta t_k^2.
\end{align}
The summation can be bounded by
\begin{align}
  \sum_{k=0}^{n-1} \Delta t_k^2 &= \Delta t_0^2 \sum_{k=1}^{n} b^{2(k-1)} \leq \Delta t_0^2 \int_0^{n} b^{2k} \,dk = \frac{t_{2n} \Delta t_0^2}{2a \log b}
\end{align}
We use this estimate in the term,
\begin{align}
  \Big\| \int_{t_n}^{t_{n+1}} \frac{1}{\sqrt{\tau}} \I_n \big( S(\tau) - \wt S_n \big)\, d\tau \Big\| &\leq E_2^{(n)} \int_{t_n}^{t_{n+1}} \frac{t_{n+1}-\tau}{\Delta t \sqrt{\tau}}\, d\tau + E_2^{(n+1)} \int_{t_n}^{t_{n+1}} \frac{\tau-t_n}{\Delta t \sqrt{\tau}}\, d\tau.
\end{align}
For the first interval we have
\begin{equation}
  \int_{0}^{t_1} \frac{t_1-\tau}{\Delta t_0 \sqrt{\tau}}\, d\tau = \frac{4\sqrt{\Delta t_0}}{3}, \quad \int_{0}^{t_1} \frac{\tau}{\Delta t_0 \sqrt{\tau}}\, d\tau = \frac{2\sqrt{\Delta t_0}}{3},
\end{equation}
and for the remaining intervals we have the bound
\begin{equation}
  \int_{t_n}^{t_{n+1}} \frac{t_{n+1}-\tau}{\Delta t_n \sqrt{\tau}}\, d\tau \leq \frac{\Delta t_n}{2\sqrt{t_n}}, \quad \int_{t_n}^{t_{n+1}} \frac{\tau-t_n}{\Delta t_n \sqrt{\tau}}\, d\tau \leq \frac{\Delta t_n}{2\sqrt{t_n}}.
\end{equation}
Therefore,
\begin{align}
  E_2 &= \frac{1}{\sqrt{\pi}} \sum_{n=0}^{N-1} \Big\| \int_{t_n}^{t_{n+1}} \frac{1}{\sqrt{\tau}} \I_n \big( e^{\ii \tau(A-I)} - \wt S_{n} \big)\, d\tau \Big\| \nonumber \\
    &\sim \frac{\|A-I\|^2}{\sqrt{\pi}} \bigg( \frac{\Delta t_0^{5/2}}{3} + \frac{\Delta t_0^2}{2a \log b} \sum_{n=1}^{N-1} \frac{e^{-\sigma t_n} t_{2n} \Delta t_n}{\sqrt{t_n}} \bigg) \label{eq:E2Bound}
\end{align}
The sum has the asymptotic bound
\begin{align}
  \frac{1}{a} \sum_{n=1}^{N-1} \frac{e^{-\sigma t_n} t_{2n} \Delta t_n}{\sqrt{t_n}} &= \frac{\Delta t_0}{\sqrt{a}} \sum_{n=1}^{N-1} \frac{e^{-\sigma a (b^n-1)}(b^{2n}-1) b^n}{\sqrt{b^n - 1}} \nonumber \\
    &\leq \frac{b^3 \Delta t_0}{\sqrt{a}} \sum_{n=1}^{N} \frac{e^{-\sigma a (b^n-1)}(b^{2(n-1)}-b^{-1}) b^{n-1}}{\sqrt{b^n-1}} \nonumber \\
    &\leq \frac{b^3 \Delta t_0}{\sqrt{a}} \int_0^N \frac{e^{-\sigma a (b^n-1)}(b^{2n}-b^{-1}) b^{n}}{\sqrt{b^n-1}} \,dn \nonumber \\
    &\leq \frac{b^3 \Delta t_0}{\sqrt{a}} \int_0^{\infty} \frac{e^{-\sigma a (b^n-1)}(b^{2n}-b^{-1}) b^{n}}{\sqrt{b^n-1}} \,dn \nonumber \\
    &= \frac{\sqrt{\pi}b^2 \Delta t_0}{4a^3\sigma^{5/2}\log b} \big( 3b + 4b a \sigma + 4a^2 (b-1) \sigma^2 \big) \nonumber \\
    &\sim \frac{\sqrt{\pi} R}{4T\sigma^{3/2}}.
\end{align}
Substituting this into equation~\eqref{eq:E2Bound} with $\log b \sim
R\Delta t_0/T$ completes the proof.
\end{proof}

\bibliographystyle{amsplain}
\bibliography{main} 

\end{document}